\newtheorem{Thm}{Theorem}
\newtheorem{Le}[Thm]{Lemma}
\newtheorem{Cor}[Thm]{Corollary}
\newtheorem{Rem}[Thm]{Remark}
\newcommand{\dist}{\operatorname{dist}}
\newcommand{\bD}{\mathbf{D}}
\newcommand{\oQ}{{Q_0}}
\newcommand{\rdN}{\eR^{d\times N}}
\newcommand{\rd}{\eR^{d}}
\newcommand{\camp}{{\mathcal{L}}}
\newcommand{\mean}[1]{\langle #1\rangle}
\newcommand{\dx}{}
\begin{document}

\title[$L^q$ estimates for nonlinear non-diagonal systems]{Gradient $L^q$ theory for a class of non-diagonal nonlinear elliptic systems}\thanks{M. Bul\'{\i}\v cek and V. M\'acha were supported by the Czech Science Foundation (grant no. 16-03230S). V. M\'acha was also supported by RVO: 67985840. M. Bul\'{\i}\v cek, P. Kaplick\'y and V. M\'acha acknowledge membership in the Ne\v cas Center of Mathematical Modeling (http://ncmm.karlin.mff.cuni.cz).}

\author[M.~Bul\'{\i}\v{c}ek]{Miroslav Bul\'{\i}\v{c}ek}
\address{Mathematical Institute, Faculty of Mathematics and Physics, Charles University, Sokolovsk\'{a} 83,
186 75 Praha 8, Czech Republic}
\email{mbul8060@karlin.mff.cuni.cz}

\author[M.~Kalousek]{Martin Kalousek}
\address{University of W\"urzburg, Institute of Mathematics, Emil-Fischer-Straße 40, 97074 W\"urzburg, Germany}
\email{kalousek@karlin.mff.cuni.cz}

\author[P.~Kaplick\'{y}]{Petr Kaplick\'y}
\address{Department of Mathematical Analysis, Faculty of Mathematics and Physics, Charles University, Sokolovsk\'{a} 83,
186 75 Praha 8, Czech Republic}
\email{kaplicky@karlin.mff.cuni.cz}

\author[V.~M\'{a}cha]{V\'aclav M\'acha}
\address{Institute of Mathematics of the Czech Academy of Sciences, \v Zitn\'a 25, 115 67 Praha 1, Czech Republic}
\email{macha@math.cas.cz}



\subjclass[2010]{35B65, 35J60}
\keywords{regularity, gradient estimates, non-diagonal elliptic systems, non-Uhlenbeck systems, splitting condition}

\newcommand{\dashint}{-\!\!\!\!\!\!\int}
\newcommand{\qs}{q^\star}

\newcommand{\vv}{\mathcal V}
\newcommand{\eS}{\mathbb S}
\newcommand{\loc}{{loc}}
\newcommand{\eR}{\mathbf{R}}
\newcommand{\eN}{\mathbf{N}}
\newcommand{\nm}[1]{\left\|#1\right\|}
\newcommand{\dual}[1]{\left\langle#1\right\rangle}
\newcommand{\spt}{\operatorname{spt}}
\newcommand{\csubset}{\subset\subset}

\newcommand{\dd}{\mbox{d}}
\newcommand{\ltws}{local in time weak solution}
\setlength{\jot}{2.5ex}
\newcommand{\isigma}{s}
\newcommand{\sss}{s}

\newcommand{\Du}{\bD v}
\newcommand{\du}{\nabla v}
\newcommand{\xxl}{\mathcal{X}_{\ell}}
\newcommand{\wwl}{\mathcal{W}_{\ell}}
\newcommand{\bbl}{\mathcal{B}_{\ell}}
\newcommand{\atl}{\mathcal{A}_{\ell}}
\newcommand{\etl}{\mathcal{E}_{\ell}}
\newcommand{\etr}{\mathcal{E}}
\newcommand{\ddt}{\frac{d}{dt}}
\newcommand{\dt}{\partial_t}
\newcommand{\norm}[3]{\|{#1}\|_{#2}^{#3}}
\newcommand{\dalka}[3]{\operatorname{dist}_{#3}(#1,#2)}
\newcommand{\eend}[1]{\hfill \ensuremath{\Box}}
\newcommand{\beggin}[1]{Proof. }
\def\diff{\mathsf{d}}
\def\diver{\mathop{\mathrm{div}}\nolimits}
\def\diam{\mathrm{diam}}

\begin{abstract}
We consider a class of nonlinear non-diagonal elliptic systems with $p$-growth and establish the $L^q$-integrability for all $q\in [p,p+2]$ of any weak solution provided  the corresponding right hand side belongs to the corresponding Lebesgue space and the involved elliptic operator asymptotically satisfies the $p$-uniform ellipticity,   the so-called splitting condition and it is continuous with respect to the spatial variable. For operators satisfying the uniform $p$-ellipticity condition the higher integrability is known for $q\in[p,dp/(d-2)]$ and for operators having the so-called Uhlenbeck structure, the theory is valid for all $q\in [p,\infty)$. The key novelty of the paper is twofold. First, the statement uses only the information coming from the asymptotic operator and second, and more importantly, by using the splitting condition, we are able to extend the range of possible $q$'s significantly whenever $p<d-2$.
\end{abstract}

\maketitle

\section{Introduction}\label{intro}


This paper focuses on  the properties of a local weak solution $v\in W^{1,p}_{loc}(\Omega; \eR^N)$, $p>1$ to the  following nonlinear system
\begin{equation}
\begin{split}
-\diver \mathcal{A}(x,\nabla v(x)) &= -\diver G(x) \textrm{ in } \Omega.\label{eqn}
\end{split}
\end{equation}
Here $\Omega \subset \eR^d$ is an open set with $d\ge 2$, $\mathcal{A}:\Omega \times \eR^{d\times N} \to \eR^{d\times N}$ with $N\in \mathbb{N}$ is a Carath\'{e}odory mapping and $G\in L^{p'}_{loc}(\Omega; \eR^{d\times N})$ is arbitrary. In addition, we assume that the mapping $\mathcal{A}$ has the $(p-1)$ growth and satisfies the $p$-coercivity, i.e., there exist $\alpha_0>0$ and $\alpha_1\ge 0$ such that for almost all $x\in \Omega$ and all $\eta\in \eR^{d\times N}$ the following holds
\begin{align}
|\mathcal{A}(x,\eta)|&\le \alpha_1(1+|\eta|^{p-1}),\label{p-growth}\\
\mathcal{A}(x,\eta) \cdot \eta &\ge \alpha_0|\eta|^p - \alpha_1.\label{p-coercivity}
\end{align}
Our general aim is to establish the local $L^q$ estimates with $q>p$ for the gradient of any distributional solution $v\in W^{1,p}_{loc}(\Omega, \eR^N)$ to \eqref{eqn} provided that $G\in L^{\frac{q}{p-1}}(\Omega; \eR^{d\times N})$, i.e., we want to show that
\begin{equation}
\label{dream1}
\int_{Q} |\nabla v|^q \le C(R,q)\left(1+\int_{4Q} |G|^{\frac{q}{p-1}} + \left(\int_{4Q} |\nabla v|^p\right)^{\frac{q}{p}}\right)
\end{equation}
for every cube $Q\subset 4Q\subset \Omega$. Under the general assumptions \eqref{p-growth}--\eqref{p-coercivity} such a theory however cannot be built, see the counterexamples in \cite{Ne77,Ser64,Ser65,Ser65I,SvYa02}, without additional conditions on the smoothness of $\mathcal{A}$ with respect to the spatial variable and the dependence of $\mathcal{A}$ on $\eta$. The only available positive result in this generality, i.e., assuming only \eqref{p-growth}--\eqref{p-coercivity}, is based on the use of the reverse H\"{o}lder inequality, see e.g. \cite{giaquinta}, and we know that there exists $\varepsilon>0$ depending only on $\alpha_0, \alpha_1, d,p$ such that \eqref{dream1} holds for all $q\in [p,p+\varepsilon)$. On the other hand, due to the fundamental work \cite{Uhl77}, we have the full regularity (in this case $\mathcal{C}^{1,\alpha}$-regularity) of solution to \eqref{eqn} with  $\mathcal{A}(x,\eta) \sim |\eta|^{p-2}\eta$  and smooth right hand side. This information can be used for establishing the nonlinear Calder\'{o}n-Zygmund theory that have its roots in articles \cite{iwa2} and \cite{iwa1} and we know that the estimate \eqref{dream1} is true for all $q\in [p,\infty)$ for these operators. In addition, the result can be even strengthen to the BMO setting, see \cite{DieKapSch11}. Moreover, following the idea presented e.g. in \cite{Mar1996,MarPap2006}, we know that the only operator that drives the gradient theory is the asymptotic operator (note that if it exists, it is necessarily homogeneous of order $p-1$)
$$
\mathcal{A}_{\infty}(x,\eta):=\lim_{\lambda \to \infty} \frac{\mathcal{A}(x,\lambda \eta)}{\lambda^{p-1}},
$$ 
which will be also used in this paper. 

Our general goal is to find a more general structural assumptions on $\mathcal{A}$ or $\mathcal{A}_{\infty}$ respectively than those introduced by Uhlenbeck, that allow us to establish the estimate \eqref{dream1} for larger values of $q$ than those given by the reverse H\"{o}lder technique. The first step in this direction was already done.  The method presented in \cite{CaPe1998} somehow separates arguments from function theory and theory of partial differential equations but is, however, not directly applicable to PDE's for which we have relatively poor information about the solution, which is the case of non-diagonal systems, with right hand side and in the situation that only low regularity of comparison problem is available.  The approach to this situation was carefully developed in \cite[Section~7]{KriMin06} for operators being uniformly $p$-monotone, i.e., for operators satisfying for some $\delta_0\ge 0$ and all $\eta,\xi \in \eR^{d\times N}$ and almost all $x\in \Omega$ the following
\begin{equation}\label{p-mono}
\alpha_0 (\delta_0+ |\eta|^2)^{\frac{p-2}{2}} |\xi|^2 \le \frac{\partial \mathcal{A}(x,\eta)}{\partial \eta} \cdot (\xi \otimes \xi)\le \alpha_1 (\delta_0+ |\eta|^2)^{\frac{p-2}{2}} |\xi|^2,
\end{equation}
see also  \cite{DieKa2013} for even more general operators related to the generalized Sokes system.\\
Under such assumption, one can use the higher regularity technique to deduce that for ``smooth" $G$ the solution $v$ to \eqref{eqn} belongs to $W^{1,dp/(d-2)}_{loc}(\Omega; \eR^{d\times N})$ and incorporating this information with the fundamental paper \cite{CaPe1998}, one can conclude that \eqref{dream1} is valid for all $q\in [p,dp/(d-2)]$. 
This approach is used also in this paper. 

Recently, an everywhere $\mathcal{C}^{0,\alpha}$-regularity theory for elliptic systems was built in \cite{BuFre2012} (and further extended in \cite{BuFreStei2014,BuFreStei2015}) for operators $\mathcal{A}$ being of potential form and satisfying the so-called splitting condition but not requiring the monotonicity assumption \eqref{p-mono}. Thus, the aim of the paper is to combine the everywhere $\mathcal{C}^{0,\alpha}$ theory with the $p$-monotonicity assumption in order to get the bigger class of possible $q$'s for which \eqref{dream1} holds. It will be seen in the paper that a new borderline for these operators then will be $q\in[p,p+2]$, which is a significant improvement in case that $p<d-2$, i.e., in high dimensions and for low value of $p$.

We finish the introduction by precise formulation of the assumption on $\mathcal{A}$ and by the statement of our key result.
\subsection{Assumptions}
In this subsection we state the assumptions on $\mathcal{A}$. Beside \eqref{p-growth}--\eqref{p-coercivity} we shall assume some qualitative properties of the asymptotic operator $\mathcal{A}_{\infty}$. To do it, we use  a Carath\'{e}odory function $F:\Omega\times \eR^{d\times N}\to \eR$ and we denote
$$
F_{\eta}(x,\eta):=\frac{\partial F(x,\eta)}{\partial \eta} :\Omega \times \eR^{d\times N} \to \eR^{d\times N},
$$
which is supposed to be Carath\'{e}odory. Moreover, we assume that $F_{\eta}$ is the asymptotic operator to $\mathcal{A}$, i.e., for all $\eta$ and almost all $x\in \Omega$ there holds
\begin{equation}
\label{ident-as}
F_{\eta}(x,\eta)=\lim_{\lambda \to \infty} \frac{\mathcal{A}(x,\lambda\eta)}{\lambda^{(p-1)}}
\end{equation}
and we assume the certain uniformity of the above limit. More precisely, we require that  for all $\varepsilon >0$ there exists $\lambda>0$ such that for all $\zeta \in \eR^{d\times N}$ fulfilling $|\eta|\ge \lambda$ and almost all $x\in \Omega$ the inequality
\begin{equation}
|F_{\eta}(x,\eta)-\mathcal{A}(x,\eta)|\le \varepsilon |\eta|^{p-1}.\label{asse}
\end{equation}
holds. 
Note that from the assumption \eqref{p-growth}--\eqref{p-coercivity} and \eqref{ident-as}, it directly follows that the potential $F$ is homogeneous of degree $p$, i.e., $F(x,\lambda\eta)=\lambda^pF(x,\eta)$ and also $F_{\eta}$ is homogeneous of degree $(p-1)$, i.e., $F_{\eta}(x,\lambda\eta)=\lambda^{p-1}F_{\eta}(x,\eta)$. Moreover, assuming \eqref{p-growth}--\eqref{p-coercivity} and \eqref{ident-as}--\eqref{asse}, we deduce that 
\begin{align}
\alpha_0 |\eta|^p\le F_\eta(x,\eta) \cdot \eta &=pF(x,\eta)\le \alpha_1|\eta|^p, \label{5.55}\\
|F_\eta(x,\eta)| &\le \alpha_1|\eta|^{p-1} \label{5.56}
\end{align}
for all $\eta\in \eR^{d\times N}$ and almost all $x\in \Omega$.

Finally, we specify the requirements for $F$. 
The theory 
requires also certain smoothness with respect to the spatial variable and therefore we assume that for all $\varepsilon>0$ there exists $\delta>0$ such that for all $x_1,x_2\in \Omega$ satisfying $|x_1-x_2|\le \delta$ and for all $\eta \in \eR^{d\times N}$ we have  
\begin{align}\label{5.75}
\left|F_\eta(x_1,\eta) - F_\eta(x_2,\eta)\right|\leq \varepsilon |\eta|^{p-1}.
\end{align}
Concerning the $p$-monotonicity, we transfer it to $F$ and assume that  for almost all $x\in\Omega$, the function  $F(x,\cdot)\in C^2(\rdN\setminus\{0\})$ and that $\partial^2_{\eta^2}F(x,\eta)$ is measurable for all $\eta\in \eR^{d\times N}$ and further we require that for all $\eta,\xi \in\rdN$ with $\eta \neq 0$ and a.a. $x\in \Omega$
\begin{align}
\alpha_0|\eta|^{p-2}|\xi|^2&\le \frac{\partial^2 F(x,\eta)}{\partial \eta^2} \cdot (\xi \otimes \xi)\le \alpha_1|\eta|^{p-2}|\xi|^2.\label{5}
\end{align}
Finally, following \cite{BuFre2012}, we shall assume the splitting\footnote{Splitting conditions refers to the fact that $A^{\alpha \beta}$ does not depend on $i,j$. Recall, the indexes $i,j$ correspond to derivatives w.r.t. $x_i$, $x_j$ respectively.} condition for $F_{\eta}$.  It means we assume that there are  symmetric matrix-valued Carath\'{e}odory function $A: \Omega\times \eR^{d\times N}\to \eR^{N\times N}$ and a symmetric matrix valued measurable $b: \Omega \to  \eR^{d\times d}$  such that
\begin{align}
F_{\eta_{i}^\alpha}(x,\eta)&=\sum_{\beta =1}^N \sum_{j=1}^d A^{\alpha \beta}(x,\eta) b_{ij}(x)\eta_j^\beta \label{6}
\end{align}
for all $\eta \in \eR^{d\times N}$,  all $i=1,\ldots, d$, all $\alpha=1,\ldots, N$ and a.a. $x\in \Omega$. Observe that \eqref{6} is an additional structural condition which is independent of the fact, that the $F_{\eta^{\alpha}_i}$ come from a potential. Moreover,  we require  that for all $\mu \in \eR^{N}$, all $\eta \in \eR^{d\times N}$ fulfilling $\eta\neq 0$, all $v\in \eR^d$ and almost all $x\in \Omega$ the inequalities
\begin{equation}
\begin{split}
\alpha_0 |\eta|^{p-2}|\mu|^2&\le\sum_{\alpha,\beta=1}^N A^{\alpha \beta}(x,\eta)\mu^\alpha \mu^{\beta} \le \alpha_1 |\eta|^{p-2}|\mu|^2,\\
\alpha_0 |v|^2 &\le \sum_{i,j=1}^d b_{ij}(x) v_i v_j \le \alpha_1 |v|^2\label{6.1}
\end{split}
\end{equation}
hold.

\subsection{Main result}
The main result is summarized in the following theorem.
\begin{Thm}\label{mainclaim}
Let $\Omega \subset \eR^d$ be an open set, $p\in(1,\infty)$, $\mathcal{A}$ satisfy \eqref{p-growth}--\eqref{p-coercivity} and \eqref{ident-as} and the corresponding $F$ fulfill  \eqref{5}--\eqref{6.1}. Then there exists $q_0>p+2$, $R_0>0$ and $c\ge 0$ depending only on $d,N,\alpha_0,\alpha_1,p$ and $F$ such that for any $G\in L^{q/(p-1)}_{loc}(\Omega;\eR^{d\times N})$ with  $q\in (p,q_0)$ and $v\in W^{1,p}_{loc}(\Omega;\eR^N)$ solving \eqref{eqn} in the sense of distribution, the following estimate
\begin{equation}\label{finalest}
\dashint_{Q} |\nabla v|^q\leq
c\left[1+\dashint_{4Q}|G|^{q/(p-1)} +\left(\dashint_{4Q} |\nabla v|^p\right)^{\frac{q}{p}}\right]
\end{equation}
holds true for all cubes $Q\subset \eR^d$ with sidelength smaller than $R_0$ and satisfying $4Q\subset \Omega$.
\end{Thm}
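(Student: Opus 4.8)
The plan is to run the nonlinear Calder\'on--Zygmund scheme of \cite{CaPe1998,KriMin06}, reducing \eqref{finalest} to two ingredients: a \emph{good comparison estimate} on small cubes and a \emph{scale invariant higher integrability estimate} for a frozen homogeneous comparison problem. Fix a cube $B$ with center $x_B$, side-length below $R_0$, and $4B\csubset\Omega$, and let $w\in v+W^{1,p}_0(B;\eR^N)$ solve $-\diver F_\eta(x_B,\nabla w)=0$ in $B$; this Dirichlet problem is well posed because \eqref{5} makes $F(x_B,\cdot)$ uniformly convex, hence $F_\eta(x_B,\cdot)$ strictly $p$-monotone. The two ingredients are: (a) $\dashint_B|\nabla v-\nabla w|^p$ is small relative to $\dashint_B|\nabla v|^p$ and the data; and (b) a reverse H\"older bound $\dashint_{B_{r/2}}|\nabla w|^{q_0}\le c\big(\dashint_{B_{2r}}|\nabla w|^p\big)^{q_0/p}$ on concentric balls $B_{2r}\subset B$, for some $q_0>p+2$ and $c$ independent of $B$.

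For (a) I would test the difference of the equations for $v$ and $w$ with $v-w\in W^{1,p}_0(B;\eR^N)$, inserting $F_\eta(x_B,\nabla v)$ as an intermediate term. Strict $p$-monotonicity of $F_\eta(x_B,\cdot)$ bounds the resulting left-hand side below by $c\dashint_B(|\nabla v|+|\nabla w|)^{p-2}|\nabla v-\nabla w|^2$, hence (treating $p\ge2$ and $1<p<2$ separately, as usual) by $c\dashint_B|\nabla v-\nabla w|^p$. On the right-hand side, $\dashint_B G:\nabla(v-w)$ is absorbed via Young's inequality against $\dashint_B|G|^{p'}$, while $\dashint_B\big(\mathcal A(x,\nabla v)-F_\eta(x_B,\nabla v)\big):\nabla(v-w)$ is split according to whether $|\nabla v|$ exceeds the threshold $\lambda(\varepsilon)$ from \eqref{asse}: writing $\mathcal A(x,\nabla v)-F_\eta(x_B,\nabla v)=\big(\mathcal A(x,\nabla v)-F_\eta(x,\nabla v)\big)+\big(F_\eta(x,\nabla v)-F_\eta(x_B,\nabla v)\big)$, on $\{|\nabla v|\ge\lambda\}$ the integrand is at most $2\varepsilon|\nabla v|^{p-1}|\nabla(v-w)|$ by \eqref{asse} together with \eqref{5.75} (for $R_0$ small enough), while on $\{|\nabla v|<\lambda\}$ it is bounded by a constant depending only on $\lambda$. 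Young's inequality then yields $\dashint_B|\nabla v-\nabla w|^p\le C\varepsilon\dashint_B|\nabla v|^p+C(\varepsilon)\dashint_B(1+|G|^{p'})$, which is the required smallness; the $C\varepsilon$-term is absorbed at the stopping-time level of the covering argument.

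Step (b) is the heart of the matter and the step I expect to be the main obstacle. Here $w$ solves a frozen homogeneous system which \emph{simultaneously} has the potential-plus-splitting structure \eqref{6}--\eqref{6.1}, so by \cite{BuFre2012} it is everywhere $C^{0,\alpha}$ with a quantitative, scale invariant estimate; normalizing $(w)_{B_{2r}}=0$ and using Poincar\'e's inequality, this gives $\nm{w}_{L^\infty(B_r)}\le c\,r\big(\dashint_{B_{2r}}|\nabla w|^p\big)^{1/p}$ for $B_{2r}\subset B$. And it has the strong $p$-monotonicity \eqref{5}, so differentiating the equation and testing with $\partial_s w\,\zeta^2$ (legitimate since $F(x_B,\cdot)\in C^2(\rdN\setminus\{0\})$) yields the second-order Caccioppoli estimate $\int_{B_{r/2}}|\nabla w|^{p-2}|\nabla^2w|^2\le c\,r^{-2}\int_{B_r}|\nabla w|^p$. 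The new point is to test the \emph{undifferentiated} system $-\diver F_\eta(x_B,\nabla w)=0$ with $w\,|\nabla w|^{2\gamma}\zeta^2$: by \eqref{5.55} the principal term dominates $\alpha_0\int|\nabla w|^{p+2\gamma}\zeta^2$, while the remaining terms are linear in $w$ and, by \eqref{5.56}, controlled by $\nm{w}_{L^\infty}$ times integrals of $|\nabla w|^{p-1+2\gamma}\zeta|\nabla\zeta|$ and $|\nabla w|^{p-2+2\gamma}|\nabla^2w|\zeta^2$. A Cauchy--Schwarz split of the last integral produces the factor $\big(\int|\nabla w|^{p-2}|\nabla^2w|^2\zeta^2\big)^{1/2}$, controlled by the second-order Caccioppoli estimate, times $\big(\int|\nabla w|^{p-2+4\gamma}\zeta^2\big)^{1/2}$; since $p-2+4\gamma\le p+2\gamma$ precisely when $\gamma\le1$, the scheme closes up to the exponent $p+2$. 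Inserting $\nm{w}_{L^\infty(B_r)}\le c\,r\big(\dashint_{B_{2r}}|\nabla w|^p\big)^{1/p}$ and dividing by $|B_r|$ gives the scale invariant reverse H\"older inequality $\dashint_{B_{r/2}}|\nabla w|^{p+2}\le c\big(\dashint_{B_{2r}}|\nabla w|^p\big)^{(p+2)/p}$, after which a Gehring-type lemma upgrades $p+2$ to some $q_0>p+2$, and the homogeneity of degree $p-1$ of $F_\eta$ makes $c$ independent of $x_B$. The delicate parts are the borderline absorption at $\gamma=1$, justifying the test function by approximation (bootstrapping $\gamma$ from $0$ towards $1$ along the integrability already available), the separate treatment of $1<p<2$ and $p>2$ in the monotonicity estimates, and keeping all constants uniform in the frozen point and independent of $w$.

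Granting (a) and (b), the conclusion is routine: at a level $\lambda$ one makes a Calder\'on--Zygmund decomposition of the super-level set $\{M_{4Q}(|\nabla v|^p)>N^p\lambda\}$ into cubes on which $\dashint|\nabla v|^p\simeq\lambda$, applies (a) together with the reverse H\"older bound (b) for the comparison map --- so that $\{|\nabla w|>N\lambda^{1/p}\}$ has measure $\lesssim N^{-q_0}$ times the relevant energy --- to obtain a density decay for the distribution function of $M(|\nabla v|^p)$ in terms of those of $M(|G|^{p'})$ and of $1$, and integrates this decay against $\lambda^{q/p-1}\,d\lambda$. Since $q_0>p+2$, every $q\in(p,p+2]$ is admissible, and \eqref{finalest} follows for cubes with side-length below the $R_0$ dictated by \eqref{5.75}.
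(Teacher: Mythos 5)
Your proposal is essentially correct and follows the same overall architecture as the paper: freeze the coefficient at the center of a small cube, solve the homogeneous comparison problem for $F_\eta(x_0,\cdot)$, prove a comparison estimate by testing the difference of the two systems and splitting according to the threshold $\lambda(\varepsilon)$ from \eqref{asse} together with \eqref{5.75} (this is exactly Lemma~\ref{lem:ex+un}), prove a second--order Caccioppoli estimate and use the everywhere $C^{0,\alpha}$ theory of \cite{BuFre2012} for the frozen splitting--structure system, and then run the Calder\'on--Zygmund/comparison iteration of \cite{CaPe1998,KriMin06} (which the paper packages as Lemma~\ref{CZCP}, applied to $|\nabla v|^{p/2}$ and $|\nabla u|^{p/2}+1$ rather than to level sets of the maximal function, but this is the same machinery). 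Your derivation of the $L^{p+2}$ bound by testing with $(w-(w)_{B_{2r}})|\nabla w|^{2\gamma}\zeta^2$ is, at $\gamma=1$, the integration-by-parts identity of Lemma~\ref{lem:8}; note only that the $C^{0,\alpha}$ estimate \eqref{Campan}--\eqref{HSEst} is inhomogeneous, so your $L^\infty$ bound carries an additive $+cr$ and the reverse H\"older inequality an additive $+1$ as in \eqref{P+2Int}, which is harmless since \eqref{finalest} contains the constant $1$ anyway. The genuine divergence is in how one gets past the exponent $p+2$: you invoke Gehring's lemma to upgrade \eqref{P+2Int} to some $q_0>p+2$, which is legitimate (apply it to $|\nabla u|^p+1$ with exponent $(p+2)/p$, uniformly in the frozen point) and indeed the paper explicitly remarks that this option exists; it suffices for the theorem as stated, since only the existence of some $q_0>p+2$ is claimed. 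The paper instead deliberately avoids Gehring and interpolates the Nikolski\u{\i} regularity $V(\nabla u)\in W^{1,2}_{loc}$ against the H\"older continuity in Lemmas~\ref{lem:pgreater2} and \ref{lem:pless2} (with separate arguments for $p\ge 2$ and $p<2$), which yields the explicit and larger threshold $q_0=p+2/(1-\alpha)$ recorded in Remark~\ref{Rq0}; your route buys a shorter proof but only a small, non-explicit margin $p+2+\varepsilon$ and loses the quantitative dependence on the H\"older exponent $\alpha$.
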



\begin{Rem}\label{Rq0}
The precise value of $q_0$ is equal to $p+ 2/(1-\alpha)$ where $\alpha$ is a coefficient of H\"older continuity which appear in Theorem \ref{Thm:HoldCont}.
\end{Rem}

The remaining part of this paper is devoted to the proof of Theorem \ref{mainclaim}. In the next section we introduce some observations concerning the Young functions and Nikolski\u{\i} spaces. An approximate problem as well as estimates of its solution are presented in Section \ref{comparison}. The proof of Theorem \ref{mainclaim} is concluded in Section \ref{proofofmain} where we use Lemma \ref{CZCP} in order to compare a solution to \eqref{eqn} with the solution of the approximate problem.

\section{Auxiliary tools and notation}

Throughout this paper we work with a Young function $\varphi(t)=\frac{t^p}p$ which clearly satisfies \cite[Assumption~1]{DE}. We denote by $\varphi^*$ its complementary function (see cf. \cite{DE}).
In order to simplify estimates we introduce the function $V(A)=|A|^{(p-2)/2}A$ for $A\in \eR^{d\times N}$. With this notation we get from \eqref{5.55} and \eqref{5.56}
\begin{equation}\label{in:2}
\exists C>0,\forall \eta\in\eR^{d\times N},x\in\Omega: |V(\eta)|^2\leq CF_\eta(x,\eta)\cdot\eta, |F_\eta(x,\eta)|\leq C\varphi'(|\eta|).
\end{equation}

There exists $c,\ c',\ c''> 0$ such that
\begin{equation}\label{in:4}
\varphi^*(\varphi'(|\eta|))\leq c\varphi(|\eta|) \leq c' |V(\eta)|^2\leq c'' \varphi^*(\varphi'(|\eta|))
\end{equation}
see \cite[(2.3)]{DE}. Further, \cite[Lemma~3]{DE} gives
\begin{multline}\label{est:hammer}
\exists c,C>0,\forall A,B\in\rdN:\\
c|V(A)-V(B)|^2\leq \varphi''(|A|+|B|)|A-B|^2\leq C|V(A)-V(B)|^2.
\end{multline}
%
Moreover, from \eqref{5} it follows that there exists $c>0$ such that for all $\eta,\xi\in\rdN$, $x\in\Omega$
\begin{equation}\label{in:5}
(F_\eta(x,\eta) - F_\eta(x,\xi))\cdot(\eta - \xi) \geq c|V(\eta) - V(\xi)|^2,
\end{equation}
compare with \cite[proof of Lemma~21]{DE}.

In the following part of the paper, estimates in Nikolski\u{\i} spaces $N^{\lambda,p}$ appear. The definition and basic properties of these spaces can be found in \cite[Chapter 4]{Triebel} where they are denoted $B^\lambda_{p,\infty}(\Omega)$. Here we call them $N^{\lambda,p}(\Omega)$. We recall their definition for reader's convenience. Several equivalent definitions can be also found in \cite[Theorem 4.2.2/2]{Triebel}. Let $Q\subset\eR^d$ be a cube with sidelength $R>0$. Let $p\in[1,\infty]$,
$\lambda\in(0,1)$. For $h\in\rd$, $h\neq 0$ we denote $Q_h=\{x\in Q;x+h\in Q\}$. We define the space $N^{\lambda,p}$ as
\def\difh{\diff_h}
\def\difs{\diff_s}
\begin{equation*}
	N^{\lambda,p}(Q)=\{w\in L^{p}(Q);[w]_{N^{\lambda,p}(Q)}<\infty\},
\end{equation*}
where the seminorm $[\cdot]_{N^{\lambda,p}}$ is defined as
\begin{equation*}
	[w]_{N^{\lambda,p}(Q)}=
R^\lambda R^{-d/p}\sup_{h\neq 0,h\in B_{R}}\frac{\|\difh w\|_{L^p(Q_{h})}}{|h|^\lambda}.
\end{equation*}
with
$$\difh f = f(x+h) - f(x).$$
The appropriate norm is $\|\cdot\|_{N^{\lambda,p}}:= R^{-d/p}\|\cdot\|_{L^p} + [\cdot]_{N^{\lambda,p}}$.

If $\lambda\in(1,2)$ there are two possibilities how to define seminorm $[w]_{N^{\lambda,p}(Q)}$
\begin{align*}
[w]_{N^{\lambda,p}(Q)}^{(1)}&=R^\lambda R^{-d/p}\sup_{h\neq 0,h\in B_{R}}\frac{\|\difh^2 w\|_{L^p(Q_{2h})}}{|h|^\lambda}\\
[w]_{N^{\lambda,p}(Q)}^{(2)}&=R^\lambda R^{-d/p}\sup_{h\neq 0,h\in B_{R}}\frac{\|\difh \nabla w\|_{L^p(Q_{h})}}{|h|^{\lambda-1}}.
\end{align*}
The norms $\|\cdot\|_{N^{\lambda,p}(Q)}^{(1)}:=R^{-d/p}\|\cdot\|_{L^p(Q)}+[\cdot]_{N^{\lambda,p}(Q)}^{(1)}$ and $\|\cdot\|_{N^{\lambda,p}(Q)}^{(2)}:=R^{-d/p}\|\cdot\|_{L^p(Q)}+[\cdot]_{N^{\lambda,p}(Q)}^{(2)}$
are equivalent according to \cite[Theorem 4.4.2/2]{Triebel}. It is important that the appearing constants does not depend on $R>0$.

We need the following property of Nikolski\u{\i} spaces
\begin{equation}\label{ReduProp}
	w\in N^{\lambda,p}(Q)\Leftrightarrow\nabla w\in N^{\lambda-1,p}(Q),
\end{equation}
which is a consequence of \cite[Theorem~4.4.2/2]{Triebel}.

\section{Comparison}\label{comparison}
\subsection{Local $L^q$ regularity result}
The origin of the technique used to prove the theorem can be traced back to \cite{iwa2,iwa1}. It is based on a local comparison of a given solution with a solution to a suitably chosen approximate problem. The method was further developed and clearly described in \cite{CaPe1998}. In this article the authors deal with the interior $L^q$ theory in the situation that for the approximate problem an $W^{1,\infty}$ local estimates are available. A suggestion what to do in the case that one can obtain for the approximate problem interior $W^{1,q_0}$ estimates, $q_0<+\infty$ only, appears also there. This situation is investigated (among other regularity results) in \cite{KriMin06}.

For a cube $Q$ and $\alpha>0$ we define a cube $\alpha Q$ as a cube with the same center as $Q$ whose edges are parallel and have length $\alpha$ times length of edges of $Q$. Furthermore, for a dyadic cube $Q_k$ we denote its predecessor by $\tilde Q_k$. This notation is effective throughout the paper.
The proof of the main theorem is based on the following lemma.

\begin{Le}\label{CZCP}
Let $1\leq r<s<t<\infty$, $Q\subset \mathcal O$ be a cube and $Q_k$ be dyadic cubes obtained from $Q$. Further, let $f\in L^{s}(4Q)$. $g\in L^s(4Q)$ and $w\in L^r(4Q)^n$. Then there exists $\varepsilon_0>0$ independent of $Q$ such that the following implication holds:

If there exists $\varepsilon\in(0,\varepsilon_0)$ such that for every dyadic cube $Q_k\subset Q$, $Q_k\neq Q$ there exists $w_a\in L^p(4\tilde Q_k)^n$ with following properties:
\begin{align}
\label{CZCP1}
\left(\dashint_{2\tilde Q_k} |w_a|^t\dx\right)^{\frac{1}{t}} &\leq \frac{C}{2} \left(\dashint_{4\tilde Q_k}|w_a|^r\dx\right)^{\frac{1}{r}},\\
\label{CZCP2}
\dashint_{4\tilde Q_k} |w_a|^r\dx &\leq C\dashint_{4\tilde Q_k} |w|^r\dx + C\dashint_{4\tilde Q_k} |g|^r\\
\label{CZCP3}
\dashint_{4\tilde Q_k} |w-w_a|^r\dx&\leq \varepsilon \dashint_{4\tilde Q_k} |w|^r\dx + C \dashint_{4\tilde Q_k} |f|^r\dx
\end{align}
then $w\in L^s(Q)^n$. Positive constants $C$ and $\varepsilon$ are independent on $Q_k$, $w_a$ and $w$.\\
Furthermore, there exists a positive constant $c$ independent of $f$, $g$ and $w$ such that
\begin{equation}\label{cpcon}
\left(\dashint_{Q} |w|^s\right)^{\frac1s}\leq
c\left[\left(\dashint_{4Q}|f|^{s}\right)^{\frac1s}+\left(\dashint_{4Q}|g|^s\right)^{\frac1s} +\left(\dashint_{4Q} |w|^r\right)^{\frac1r}\right].
\end{equation}
\end{Le}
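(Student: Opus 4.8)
The plan is to establish Lemma~\ref{CZCP} by the Calder\'on--Zygmund stopping-time scheme of Caffarelli--Peral and Kristensen--Mingione, organised around a restricted dyadic maximal operator. First I would reduce the assertion to a distributional estimate. After a harmless normalisation of the data (dividing $w$, $f$, $g$ by a common constant so that the right-hand side of \eqref{cpcon} has size one) and after replacing $|w|^r$ by its truncation at a level $k$ — this is needed because a priori $w\in L^r$ only, so the maximal function of $|w|^r$ need not belong to $L^{s/r}$, which is essentially what we want to prove — it suffices to bound $\int_Q \bigl(M(|w|^r)\bigr)^{s/r}$ by a constant independent of $k$, where $M$ is the dyadic maximal operator associated with $Q$. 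Indeed, since $s/r>1$ the operator $M$ is bounded on $L^{s/r}$ and $|w|^r\le M(|w|^r)$ almost everywhere, so such a bound yields $w\in L^s(Q)$ together with \eqref{cpcon} after undoing the normalisation and the truncation by monotone convergence.

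The core of the proof is a good-$\lambda$ inequality. Fix a threshold $\lambda_0$ comparable to the normalised data (in particular $\lambda_0\ge \dashint_Q |w|^r$). For $\lambda>\lambda_0$ apply the Calder\'on--Zygmund decomposition to $\{x\in Q:\ M(|w|^r)(x)>\lambda\}$: it is the disjoint union of the maximal dyadic subcubes $Q_k\subsetneq Q$ with $\dashint_{Q_k}|w|^r>\lambda$, and maximality forces $\dashint_{\tilde Q_k}|w|^r\le\lambda$, hence $\dashint_{4\tilde Q_k}|w|^r\le c\lambda$ by the bounded overlap of dyadic dilates. For a small parameter $\delta$ to be fixed, I would discard those $Q_k$ with $\dashint_{4\tilde Q_k}|f|^r>\delta\lambda$ or $\dashint_{4\tilde Q_k}|g|^r>\delta\lambda$; their union is contained in the superlevel sets $\{\widetilde M(|f|^r)>c\delta\lambda\}$ and $\{\widetilde M(|g|^r)>c\delta\lambda\}$ of suitable (dilated, non-centred) maximal operators. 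On each remaining cube $Q_k$, which is a dyadic subcube of $Q$ with $Q_k\neq Q$ so that the hypothesis applies, I invoke the comparison map $w_a$ on $4\tilde Q_k$.

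On such a $Q_k$, \eqref{CZCP2} gives $\dashint_{4\tilde Q_k}|w_a|^r\le c\lambda$, and feeding this into \eqref{CZCP1} yields the higher-integrability bound $\|w_a\|_{L^t(2\tilde Q_k)}^t\le c\,|Q_k|\,\lambda^{t/r}$. Splitting $|w|^r\le 2^{r-1}|w_a|^r+2^{r-1}|w-w_a|^r$ pointwise on $4\tilde Q_k$ and applying $M$, the $w_a$-part is controlled through Chebyshev's inequality and the $L^{t/r}$-boundedness of $M$ (contributing a small factor $c\,A^{-t/r}$, with $A>1$ the height multiplier), while the $w-w_a$-part is controlled through the weak-$(1,1)$ bound for $M$ together with \eqref{CZCP3}, the bound $\dashint_{4\tilde Q_k}|w|^r\le c\lambda$, and $\dashint_{4\tilde Q_k}|f|^r\le\delta\lambda$ (contributing a factor $c(\varepsilon+\delta)A^{-1}$). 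Summing over the pairwise disjoint $Q_k$, so that $\sum_k|Q_k|=|\{M(|w|^r)>\lambda\}\cap Q|$, one arrives at
\begin{equation*}
\bigl|\{M(|w|^r)>A\lambda\}\cap Q\bigr|\le c\bigl(A^{-t/r}+(\varepsilon+\delta)A^{-1}\bigr)\bigl|\{M(|w|^r)>\lambda\}\cap Q\bigr|+\bigl|\{\widetilde M(|f|^r)>c\delta\lambda\}\bigr|+\bigl|\{\widetilde M(|g|^r)>c\delta\lambda\}\bigr|
\end{equation*}
for all $\lambda>\lambda_0$, with constants depending only on $r,s,t,d,n$ and $C$.

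To close the iteration, I would first fix $A$ so large that $c\,A^{-t/r}<\tfrac12 A^{-s/r}$ — possible precisely because $t>s$ — and then fix $\varepsilon_0$ and $\delta$ so small that $c(\varepsilon_0+\delta)A^{-1}<\tfrac12 A^{-s/r}$; thus for $\varepsilon<\varepsilon_0$ the coefficient of the leading term stays strictly below $A^{-s/r}$. Multiplying the good-$\lambda$ inequality by $\lambda^{s/r-1}$, integrating over $\lambda\in(\lambda_0,k)$, and changing variables $\mu=A\lambda$ on the left, the leading terms on the two sides become the same truncated integral up to a boundary contribution bounded by $c(A\lambda_0)^{s/r}|Q|$; since that truncated integral is finite for each fixed $k$ (trivially $\le \lambda_0^{-s/r}k^{s/r}|Q|$), it can be absorbed, yielding a bound uniform in $k$. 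Adding the part $\lambda\le\lambda_0$ (bounded by $\lambda_0^{s/r}|Q|$), letting $k\to\infty$, and using $\|\widetilde M(|f|^r)\|_{L^{s/r}}+\|\widetilde M(|g|^r)\|_{L^{s/r}}\le c(\|f\|_{L^s(4Q)}^r+\|g\|_{L^s(4Q)}^r)$, we obtain the required bound on $\int_Q (M(|w|^r))^{s/r}$ and hence \eqref{cpcon}. The step I expect to be the main obstacle is twofold: making the layer-cake integration legitimate despite only having $w\in L^r$ a priori — handled by the $k$-truncation, but one must check that every constant stays independent of $k$ — and the careful geometric bookkeeping of the dilated, non-dyadic cubes $4\tilde Q_k$ (which may protrude towards $\partial Q$) and of the auxiliary maximal operators, so that $\varepsilon_0$ and the final constant are genuinely independent of $Q$, of the family $\{Q_k\}$, and of the comparison functions $w_a$.
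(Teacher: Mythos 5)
Your overall architecture --- dyadic stopping time, splitting $w$ into $w_a$ and $w-w_a$ on the selected cubes, the weak $(1,1)$ and $L^{t/r}$ bounds for the maximal operator, fixing $A$ first (using $t>s$) and only then $\varepsilon_0,\delta$, and a truncated layer-cake integration --- is exactly the Caffarelli--Peral scheme that the paper has in mind (it omits the proof, referring to \cite{CaPe1998} and \cite[Lemma 2.7]{MaTi}). But there is one genuine gap, and it sits precisely at the point which forces the lemma to be phrased on the dilated predecessors $4\tilde Q_k$: you claim that the exit-time information $\dashint_{\tilde Q_k}|w|^r\le\lambda$ implies $\dashint_{4\tilde Q_k}|w|^r\le c\lambda$ ``by the bounded overlap of dyadic dilates''. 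This implication is false: bounded overlap concerns summation over $k$, not a single cube, and $|w|^r$ may concentrate in $4\tilde Q_k\setminus\tilde Q_k$, so the dilated average can be arbitrarily larger than $\lambda$. Since the bound $\dashint_{4\tilde Q_k}|w|^r\le c\lambda$ is what feeds \eqref{CZCP2}, \eqref{CZCP1} and \eqref{CZCP3} (it produces both the $A^{-t/r}$ and the $(\varepsilon+\delta)A^{-1}$ factors), your good-$\lambda$ inequality is not established as written. Nor can this be repaired by additionally discarding the cubes on which the dilated average of $|w|^r$ is large: their union is only contained in a superlevel set of a dilated maximal function of $|w|^r$, whose measure is comparable to, not smaller than, $|\{M(|w|^r)>\lambda\}|$, and the layer-cake step then becomes circular.

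The standard fix --- the one used in the sources the paper cites --- is to build the dilated cubes into the stopping rule or into the maximal operator from the start: either perform the Calder\'on--Zygmund selection with respect to $\sup\{\dashint_{4Q'}|w|^r:\ Q'\ \text{dyadic},\ x\in Q'\subseteq Q\}$, so that maximality of $Q_k$ gives $\dashint_{4\tilde Q_k}|w|^r\le\lambda$ directly (this operator is still pointwise dominated by the restricted Hardy--Littlewood maximal operator on $4Q$, hence bounded on $L^{s/r}$, and it dominates $c|w|^r$ a.e.), or use the Caffarelli--Peral covering lemma together with a non-dyadic restricted maximal operator, so that the existence of a point $\bar x\in\tilde Q_k$ with $M(|w|^r)(\bar x)\le\lambda$ controls the average over every admissible cube containing $\bar x$, in particular over $4\tilde Q_k$. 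With either modification (and the routine localization remark that all strict dyadic ancestors of $Q_k$ fail the stopping condition, so on $Q_k$ only cubes inside $Q_k$, hence inside $2\tilde Q_k$ resp.\ $4\tilde Q_k$, are relevant), the rest of your argument goes through and yields \eqref{cpcon} with constants independent of $Q$, $Q_k$, $w_a$ and $w$; your treatment of $f$ and $g$ via auxiliary maximal operators over dilated cubes is fine, since there only the inclusion of the discarded cubes in superlevel sets is needed.
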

The proof relays on considerations presented in \cite{CaPe1998} in the situation $f=0$. In this work, the lemma is also closely connected to systems of partial differential equations.  The same technique is used in \cite{KriMin06} to show $L^q$ theory for more general systems compared to systems considered here with $f\neq 0$ up to the boundary. Since that problem is more general, authors have worse estimates for the comparison problem which results in smaller range of $q$'s for which the theory holds. In \cite{DieKa2013} the method was used to generalized Stokes problem with $f\neq 0$. The presented version is just a slight change of \cite[Lemma 2.7]{MaTi}. Since its proof is a modification of the one in \cite{CaPe1998} and \cite{MaTi} we skip it in this article.

\subsection{Comparison problem and its estimates}

\subsubsection{The comparison problem}
To apply Lemma~\ref{CZCP} we need to construct for a given solution of \eqref{eqn} a function $w_a$ with properties \eqref{CZCP1},\eqref{CZCP2} and  \eqref{CZCP3}. When studying the elliptic systems of equations such a function is usually found as a solution to a suitable approximate system. It is the same in our situation.

Let us assume that we have a local solution $v\in W^{1,p}(\Omega; \eR^N)$ of the problem \eqref{eqn}. 
Hereinafter, $\oQ\subset\Omega$ will be an arbitrary but fixed cube with the centre $x_0$, such that $Q_0\subset B_1(x_0)$.

We consider a comparison problem
\begin{equation}
\begin{split}
\diver F_\eta(x_0,\nabla u)  &= 0 \textrm{ in } \oQ,\\
u&=v \textrm{ on } \partial \oQ \label{sys}
\end{split}
\end{equation}
Existence and uniqueness of a weak solution to this problem follows by theory of monotone operators due to Assumption~\eqref{5}. It was applied for example in \cite[Lemma~3.1]{DieKa2013}  to a similar problem motivated by fluid mechanics, with an additional constraint $\diver u=0$ in $\oQ$.

We formulate the result in the next lemma.
\begin{Le} \label{lem:ex+un}
Let  $v\in W^{1,p}(\Omega)$ be a local weak solution to \eqref{eqn}. Then for every $\kappa>0$ there exists $R>0$ such that for every $\oQ$ with $\diam(\oQ)<R$ there is a unique weak solution $u\in W^{1,p}(\oQ)$
  to the problem \eqref{sys} satisfying
  \begin{gather}
    \label{lem:diff0}\exists C>0:\int_{\oQ}|V(\nabla u)|^2\leq C\int_{\oQ}|V(\nabla v)|^2,\\
    \label{lem:diff}\int_{\oQ}|V(\nabla u)-V(\nabla v)|^2\leq C(\kappa)\left(1 + \int_{\oQ} |G|^{\frac p{p-1}}\right)+\kappa\int_{Q_0}|V(\nabla v)|^2.
  \end{gather}
The constants $C,C(\kappa)$ are independent of $u$, $v$, $\oQ$, $x_0$.
\end{Le}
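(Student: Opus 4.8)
The plan is to prove Lemma \ref{lem:ex+un} in two stages: first existence/uniqueness together with the a priori bound \eqref{lem:diff0}, then the crucial comparison estimate \eqref{lem:diff}. For existence and uniqueness, since $F_\eta(x_0,\cdot)$ is, by \eqref{5.55}--\eqref{5.56} and \eqref{5}, a coercive, strictly monotone (see \eqref{in:5}) operator with $(p-1)$-growth acting from $W^{1,p}_0(\oQ)$ into its dual, the classical Browder--Minty theorem gives a unique $u\in v+W^{1,p}_0(\oQ)$ solving \eqref{sys}. To get \eqref{lem:diff0} one tests \eqref{sys} with $u-v\in W^{1,p}_0(\oQ)$, obtaining $\int_{\oQ}F_\eta(x_0,\nabla u)\cdot\nabla u=\int_{\oQ}F_\eta(x_0,\nabla u)\cdot\nabla v$; the left side is bounded below by $c\int_{\oQ}|V(\nabla u)|^2$ via \eqref{in:2}, while the right side is estimated by $|F_\eta(x_0,\nabla u)|\,|\nabla v|\le \alpha_1|\nabla u|^{p-1}|\nabla v|$ and Young's inequality with exponents $p',p$, absorbing the $|\nabla u|^p$ term into the left side; then $\int|V(\nabla v)|^2\sim\int|\nabla v|^p$ closes the estimate.

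For \eqref{lem:diff}, the idea is to test the difference of the two equations with $u-v$. Because $v$ solves \eqref{eqn} only with the \emph{$x$-dependent} operator $\mathcal{A}(x,\cdot)$ while $u$ solves the \emph{frozen} problem with $F_\eta(x_0,\cdot)$, I would write
\begin{equation*}
\int_{\oQ}\bigl(F_\eta(x_0,\nabla u)-F_\eta(x_0,\nabla v)\bigr)\cdot\nabla(u-v)
=\int_{\oQ}\bigl(\mathcal{A}(x,\nabla v)-F_\eta(x_0,\nabla v)\bigr)\cdot\nabla(u-v)-\int_{\oQ}G\cdot\nabla(u-v),
\end{equation*}
using $-\diver\mathcal{A}(x,\nabla v)=-\diver G$ and $u-v\in W^{1,p}_0(\oQ)$. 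The left-hand side is $\gtrsim\int_{\oQ}|V(\nabla u)-V(\nabla v)|^2$ by \eqref{in:5}, which is exactly the quantity to be estimated. On the right, the $G$-term is handled by Young's inequality: $|G||\nabla(u-v)|\le C_\kappa|G|^{p'}+\kappa'|\nabla(u-v)|^p$, and $|\nabla(u-v)|^p\lesssim|V(\nabla u)-V(\nabla v)|^2+(\text{lower order})$ can be absorbed. The essential term is $\int_{\oQ}(\mathcal{A}(x,\nabla v)-F_\eta(x_0,\nabla v))\cdot\nabla(u-v)$; here one splits $\mathcal{A}(x,\nabla v)-F_\eta(x_0,\nabla v)=(\mathcal{A}(x,\nabla v)-F_\eta(x,\nabla v))+(F_\eta(x,\nabla v)-F_\eta(x_0,\nabla v))$ and controls the two pieces by the asymptotic-closeness assumption \eqref{asse} and the spatial-continuity assumption \eqref{5.75} respectively. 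On the region where $|\nabla v|\ge\lambda$ both give a bound $\varepsilon|\nabla v|^{p-1}$; on the region where $|\nabla v|<\lambda$ one uses the crude bounds $|\mathcal{A}(x,\nabla v)|\le\alpha_1(1+\lambda^{p-1})$ and $|F_\eta|\le\alpha_1\lambda^{p-1}$, which only contributes to the constant $C(\kappa)$. Choosing $\varepsilon$ small (hence $R$ small in \eqref{5.75}, \eqref{asse}) and then applying Young's inequality to $\varepsilon|\nabla v|^{p-1}|\nabla(u-v)|\le C\varepsilon|\nabla v|^p+\kappa'|\nabla(u-v)|^p$, the $|\nabla v|^p\sim|V(\nabla v)|^2$ term produces the $\kappa\int_{Q_0}|V(\nabla v)|^2$ contribution (after enlarging $\oQ$ to $Q_0$ trivially since $\oQ\subset Q_0$, or noting $\oQ=Q_0$ in this subsection), while the $|\nabla(u-v)|^p$ terms are absorbed using \eqref{est:hammer}.

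The main obstacle is the absorption step: $\int|\nabla(u-v)|^p$ is not directly controlled by $\int|V(\nabla u)-V(\nabla v)|^2$ when $p>2$, because $|V(A)-V(B)|^2\sim\varphi''(|A|+|B|)|A-B|^2$ can degenerate when $|A|+|B|$ is small. The standard remedy, which I would follow, is to treat the sets $\{|\nabla v|+|\nabla u|\le 1\}$ and its complement separately: on the former, $|\nabla(u-v)|^p\le|\nabla(u-v)|^2\le C|\nabla(u-v)|^2$ and one bounds $\int|\nabla(u-v)|^2$ over this set by a constant times $|\oQ|$ plus an absorbable multiple via a further use of monotonicity, or more simply one keeps all these low-gradient contributions inside $C(\kappa)(1+\int_{\oQ}|G|^{p'})$; on the latter set $\varphi''(|\nabla u|+|\nabla v|)\gtrsim(|\nabla u|+|\nabla v|)^{p-2}\gtrsim|\nabla(u-v)|^{p-2}$ when $p\ge2$, giving $|\nabla(u-v)|^p\le C|V(\nabla u)-V(\nabla v)|^2$ directly, and a symmetric argument via Young's inequality handles $1<p<2$. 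Once this technical point is dispatched, collecting terms and choosing the smallness parameters in the right order yields \eqref{lem:diff} with $C(\kappa)$ absorbing all the low-order and $G$-dependent contributions and $\kappa\int_{Q_0}|V(\nabla v)|^2$ coming from the asymptotic/continuity error times the a priori bound \eqref{lem:diff0}.
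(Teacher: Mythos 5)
Your proposal is correct and follows the same overall scheme as the paper's proof: existence and uniqueness by monotone operator theory, \eqref{lem:diff0} by testing \eqref{sys} with $u-v$, and \eqref{lem:diff} by subtracting the two weak formulations, testing with $u-v$ extended by zero, and splitting the operator difference into a monotone part (controlled by \eqref{in:5}), a spatial-continuity part (controlled by \eqref{5.75}), and an asymptotic part treated separately on $\{|\nabla v|\le\lambda(\kappa)\}$ and its complement (controlled by \eqref{asse}); the smallness of $\diam(\oQ)$ enters exactly where you put it.

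The one genuine divergence is your treatment of the error terms $\kappa'\int_{\oQ}|\nabla(u-v)|^p$ coming from Young's inequality: you absorb them into the left-hand side $\int_{\oQ}|V(\nabla u)-V(\nabla v)|^2$, which forces the case distinction $p\ge 2$ versus $1<p<2$ and the low-gradient-set argument you sketch. That remedy can be made rigorous (for $p<2$ use $|A-B|^p\le\delta(|A|+|B|)^p+C_\delta\varphi''(|A|+|B|)|A-B|^2$ together with \eqref{est:hammer} and \eqref{lem:diff0}, choosing $\kappa'$ after $\delta$), so there is no gap, but it is more work than needed. The paper sidesteps the issue entirely: since the target inequality \eqref{lem:diff} already tolerates $\kappa\int_{Q_0}|V(\nabla v)|^2$ on the right, one simply estimates $\kappa\int_{\oQ}\varphi(|\nabla u-\nabla v|)\le C\kappa\int_{\oQ}\bigl(|V(\nabla u)|^2+|V(\nabla v)|^2\bigr)\le C\kappa\int_{\oQ}|V(\nabla v)|^2$ using \eqref{in:4} and the already established a priori bound \eqref{lem:diff0}; no absorption into the left-hand side occurs anywhere, so your ``main obstacle'' is self-imposed. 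One small imprecision: on $\{|\nabla v|\le\lambda(\kappa)\}$ the asymptotic term contributes $C(\kappa)\int_{\oQ}|\nabla(u-v)|$, not a pure constant; it needs one more Young step and the boundedness $\oQ\subset B_1(x_0)$ before it lands in $C(\kappa)+\kappa\int_{\oQ}|V(\nabla v)|^2$, exactly as in the paper's estimate of that term.
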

\begin{proof}
We skip the proof of existence and uniqueness since it follows line by line \cite[Lemma~3.1]{DieKa2013}.

Also the proofs of estimates \eqref{lem:diff0} and \eqref{lem:diff} are very similar to ones in \cite{DieKa2013} but since the elliptic operator $\mathcal A$ in \eqref{eqn} depends also on $x$ we present them here. Let us start by testing \eqref{sys} by $u-v$ to get
\begin{multline*}
\int_{\oQ} |V(\nabla u)|^2 \leq c\int_{\oQ} F_\eta (x_0,\nabla u) \nabla u = c\int_{\oQ} F_\eta(x_0,\nabla u) \nabla v \\
\leq \delta\int_{\oQ} \varphi^*(|F_\eta(x_0,\nabla u)|) + C_\delta\int_{\oQ} \varphi(|\nabla v|).
\end{multline*}
From \eqref{in:2} and \eqref{in:4} we get
$$
\int_{\oQ} |V(\nabla u)|^2 \leq \delta\int_{\oQ} |V(\nabla u)|^2 + C_\delta\int_{\oQ} |V(\nabla v)|^2,
$$
i.e. \eqref{lem:diff0}.

We subtract the weak formulation of \eqref{sys} from \eqref{eqn} and test the result by $u-v$ extended by zero outside $\oQ$ in order to get
\begin{multline}\label{druhyodhad}
\int_{\oQ} (F_\eta(x_0,\nabla u) - \mathcal A(x,\nabla v))(\nabla u - \nabla v) = \int_{\oQ} G(\nabla u - \nabla v)\\
\leq C_\kappa\int_{\oQ} \varphi^*(|G|) + \kappa \int_{\oQ} \varphi(|\nabla u - \nabla v|)\leq C_\kappa\int_{\oQ} \varphi^*(|G|) + C\kappa \int_{\oQ} |V(\nabla v)|^2,
\end{multline}
where we used \eqref{lem:diff0}.

Due to \eqref{in:5} we have
\begin{equation}\label{czcpjedna}
\int_{\oQ}(F_\eta(x_0,\nabla u) - F_\eta(x_0,\nabla v))(\nabla u-\nabla v)\geq C\int_{\oQ}|V(\nabla u) - V(\nabla v)|^2
\end{equation}

The smoothness property of $F$ (see \eqref{5.75}) together with the Young inequality and \eqref{lem:diff0} yield
\begin{multline}\label{czcpdva}
\int_{\oQ} (F_\eta(x_0,\nabla v) - F_\eta(x,\nabla v))(\nabla u-\nabla v) \leq \kappa \int_{\oQ} \varphi'(|\nabla v|)|\nabla u - \nabla v| \\ \leq \kappa C\int_{\oQ} |V(\nabla v)|^2
\end{multline}
provided $\diam(\oQ)$ is sufficiently small.

Finally, we deduce from \eqref{p-growth}, \eqref{ident-as}, \eqref{asse}, \eqref{5.55}, \eqref{lem:diff0} and the Young inequality that
\begin{multline}\label{czcptri}
\int_{\oQ}(F_\eta(x,\nabla v) - \mathcal A(x,\nabla v))(\nabla u - \nabla v)\\ \leq \int_{\oQ\cap\{|\nabla v|\leq \lambda(\kappa)\}} |F_\eta(x,\nabla v) - \mathcal A(x,\nabla v)||\nabla u - \nabla v|\\ + \int_{\oQ\cap |\nabla v|>\lambda(\kappa)} |F_\eta(x,\nabla v) - \mathcal A(x,\nabla v)| |\nabla u - \nabla v|\\ \leq C(\kappa) \int_\oQ |\nabla u - \nabla v| + \kappa \int_\oQ \varphi'(|\nabla v|)|\nabla u - \nabla v|\leq C(\kappa) + \kappa \int_\oQ |V(\nabla v)|^2. 
\end{multline}
where we use also the fact that $\oQ$ is bounded.
Since $\varphi^*(t)=(p-1) t^{\frac p{p-1}}/p$ the claim follows from \eqref{druhyodhad}, \eqref{czcpjedna}, \eqref{czcpdva} and \eqref{czcptri}.
\end{proof}

\subsubsection{Known estimates for approximative solution}

\begin{Le}\label{Lem:IntReg}
Let $u$ be the weak solution of \eqref{sys}.Then $V(\nabla u)\in W^{1,2}_{loc}(\oQ)$. Moreover we have for any cube $Q\subset 2Q\subset B$
\begin{equation}\label{CaccIneq}
	\int_{Q}|\nabla V(\nabla u)|^2\leq \frac{c}{R^2}\int_{4Q}|V(\nabla u)|^2,
\end{equation}
where $R$ is the length of the edge of $Q$.
\end{Le}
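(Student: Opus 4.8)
Since the frozen operator $\eta\mapsto F_\eta(x_0,\eta)$ carries no explicit dependence on $x$, the equation $\diver F_\eta(x_0,\nabla u)=0$ is invariant under translations in $x$, and the plan is the classical difference‑quotient argument. Fix a cube $Q$ as in the statement, write $R$ for its sidelength, and take $\theta\in C_c^\infty(2Q)$ with $\theta\equiv1$ on $Q$ and $|\nabla\theta|\le c/R$.

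\textbf{Step 1 (translated equation).} For $h\in\rd$ with $|h|$ small the map $x\mapsto u(x+h)$ again solves $\diver F_\eta(x_0,\nabla\cdot)=0$ on a neighbourhood of $2Q$, so subtracting shows that $U_h:=u(\cdot+h)-u(\cdot)$ satisfies
\[
\int\bigl(F_\eta(x_0,\nabla u(x+h))-F_\eta(x_0,\nabla u(x))\bigr):\nabla\psi\,\dd x=0,\qquad \psi\in W^{1,p}_0(2Q).
\]

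\textbf{Step 2 (Caccioppoli for $U_h$).} Test with $\psi=\theta^2U_h$ and split $\nabla\psi=\theta^2\nabla U_h+2\theta\,\nabla\theta\otimes U_h$. On the first piece the strong monotonicity \eqref{in:5} (which comes from \eqref{5}) gives a lower bound $c\int\theta^2|V(\nabla u(x+h))-V(\nabla u(x))|^2$. On the second piece one uses the pointwise bound $|F_\eta(x_0,A)-F_\eta(x_0,B)|\le c\,\varphi''(|A|+|B|)\,|A-B|$ (a consequence of \eqref{5}) together with Young's inequality; the term of the form $\varphi''(|A|+|B|)|A-B|^2$ produced along the way is $\le c|V(A)-V(B)|^2$ by \eqref{est:hammer} and is absorbed into the left‑hand side. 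This yields, uniformly in $h$,
\[
\int_Q\bigl|V(\nabla u(x+h))-V(\nabla u(x))\bigr|^2\le\frac{c}{R^2}\int_{2Q}\varphi''\bigl(|\nabla u(x+h)|+|\nabla u(x)|\bigr)\,|U_h|^2\,\dd x.
\]

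\textbf{Step 3 (estimating the right‑hand side and passing to the limit).} Using the elementary bound $\|U_h\|_{L^p(2Q)}\le|h|\,\|\nabla u\|_{L^p(3Q)}$: for $p\ge2$, $\varphi''(t)=(p-1)t^{p-2}$ is increasing, and Hölder's inequality with exponents $\tfrac p{p-2},\tfrac p2$ gives the bound $c|h|^2\int_{4Q}|\nabla u|^p$; for $1<p<2$, where $\varphi''$ is singular at the origin, one instead pairs the two factors by a Young inequality for the shifted Young functions $\varphi_a$ of \cite{DE} (using $\varphi_a''(t)\approx\varphi''(a+t)$ and $|V(A)-V(B)|^2\approx\varphi_{|A|}(|A-B|)$), again obtaining $c|h|^2\int_{4Q}|\nabla u|^p$. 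Dividing by $|h|^2$ yields a bound uniform in $h$; letting $h\to0$ along the coordinate axes then shows $V(\nabla u)\in W^{1,2}_{loc}(\oQ)$ and
\[
\int_Q|\nabla V(\nabla u)|^2\le\frac{c}{R^2}\int_{4Q}|\nabla u|^p\le\frac{c}{R^2}\int_{4Q}|V(\nabla u)|^2,
\]
the last step being \eqref{in:4}.

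\textbf{Main difficulty.} Steps~1--2 are routine testing and absorption. The genuinely delicate point is Step~3 in the singular range $1<p<2$: because $\varphi''(|\nabla u|)$ blows up where $|\nabla u|$ is small, one cannot simply put the weight on the larger factor as in the superquadratic case, and the correct pairing requires the shifted‑Young‑function calculus of \cite{DE} — essentially \cite[proof of Lemma~21]{DE} specialised to $\varphi(t)=t^p/p$. A minor bookkeeping point is to choose $|h|$ small enough that all translated domains stay inside $4Q$.
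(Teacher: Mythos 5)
Your Steps 1--2 are sound for every $p>1$ (the weighted Young inequality with weight $\varphi''(|\nabla u(x+h)|+|\nabla u(x)|)$ and absorption via \eqref{est:hammer} is fine), and for $p\ge 2$ your Step 3 (H\"older with exponents $\tfrac p{p-2},\tfrac p2$) gives a correct proof that is in fact more elementary than the paper's, which runs the machinery of \cite[Theorem~11]{DE} uniformly in $p$.

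The gap is Step 3 in the range $1<p<2$. The inequality you assert there,
\begin{equation*}
\int_{2Q}\varphi''\bigl(|\nabla u(x+h)|+|\nabla u(x)|\bigr)\,|u(x+h)-u(x)|^2\,\dd x\;\le\; c\,|h|^2\int_{4Q}|\nabla u|^p,
\end{equation*}
is a purely real-variable statement (nothing of the equation enters your pairing), and as such it is false: if on a set of positive measure both endpoint gradients satisfy $|\nabla u(x)|,|\nabla u(x+h)|\le\sigma$ while $u(x+h)-u(x)$ is of order one (the variation of $u$ concentrated in the interior of the segment from $x$ to $x+h$), the left-hand side is of size $\sigma^{p-2}\to\infty$ as $\sigma\to 0$, while the right-hand side is essentially unchanged. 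The shifted-Young calculus of \cite{DE} does not repair this pointwise in $h$: after Jensen's inequality along the segment one must change the shift from the endpoint gradients to $|\nabla u(x+\lambda h/|h|)|$, and this change of shift produces an extra term of the form $\varepsilon\int_{2Q}\theta^{\hat q}\tfrac{|s|}{|h|}\dashint_0^{|s|}|\diff_\lambda V(\nabla u)|^2\,\dd\lambda\,\dd x$, which cannot be absorbed for a single translation. This is precisely why the paper (following \cite[Lemma~12, Lemma~13]{DE}) tests with $\theta^{\hat q}\,\diff_s u$ with $\hat q$ chosen through \eqref{InThetaPow} (for $p<2$ the cutoff power $2$ you use is itself insufficient), keeps the $\varepsilon$-weighted averaged difference term, integrates the resulting inequality over $s\in[0,h)$, and only then absorbs via \cite[Lemma~13]{DE} before dividing by $|h|^2$. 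Your sketch omits this mechanism and claims the clean $|h|^2$ bound that the pairing does not deliver; note also that the crude alternative $|F_\eta(x_0,A)-F_\eta(x_0,B)|\le c|A-B|^{p-1}$ for $p<2$ only yields a bound of order $|h|$, i.e.\ half a derivative of $V(\nabla u)$ in $L^2$, not $W^{1,2}_{loc}$. So the subquadratic case --- which you correctly identify as the main difficulty --- remains unproved as written.
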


\begin{proof}
We follow the proof of \cite[Theorem~11]{DE} that deals with a more complicated situation where the elliptic term depends also on $x$.

Set $\theta\in C^\infty_C(2Q)$, $\chi_{Q}\!\leq\!\theta\!\leq\!\chi_{2Q}$, $|\nabla\theta|\leq \frac{c}{R}$. We choose $h,s\in\eR^d\setminus\{0\}$, $|s|\leq|h|\leq R$. We apply $\difs$ to \eqref{sys} and obtain for any $w\in W^{1,p}_0(2Q)$

	\begin{equation}\label{DifferenceEqn}
		\int_{2Q} (F_\eta(x_0,\nabla u(x+s)-F_\eta(x_0,\nabla u(x))\cdot\nabla w(x)\dx=0.
	\end{equation}
Let us denote $A_s(x):=F_\eta(x_0,\nabla u(x+s))-F_\eta(x_0,\nabla u(x))$. As in the proof of \cite[Theorem 4]{DE}, we choose some $\hat q>1$ and $c>0$ such that

\begin{equation}\label{InThetaPow}
	\varphi_a(\theta^{\hat q-1}t)\leq c\theta^{\hat q}\varphi_a(t)
\end{equation}
holds uniformly in $a,t\geq 0$.
We use in \eqref{DifferenceEqn} the test function $w:=\theta^{\hat q}\difs u$ to get

\begin{equation}\label{DefI1andI2}
I_1:=\int_{2Q} A_s(x)\theta^{\hat q}\cdot \nabla \difs u(x)\dx=-\int_{2Q} A_s(x){\hat q}\theta^{\hat q-1}\cdot\difs u(x)\otimes\nabla\theta\dx=:I_2.
\end{equation}

We obtain
\begin{equation*}
	I_1\geq c\int_{2Q}\theta^{\hat q}\left|\difs V(\nabla u)\right|^2
\end{equation*}

by \eqref{in:5} and in the same manner as in the proof of \cite[Lemma 12]{DE}
\begin{equation*}
	I_2\leq \varepsilon \int_{2Q}\theta^{\hat q}\frac{|s|}{|h|}\dashint_0^{|s|}|\diff_\lambda V(\nabla u)|^2\dd\lambda\dd x+c_{\varepsilon}\frac{|h|^2}{R^2}\int_{2Q}|V(\nabla u)|^2.
\end{equation*}

We integrate the resulting inequality over $s\in [0,h)$ (c.f. \cite[Lemma 12]{DE}) to get
\begin{multline*}
	\dashint_0^{|h|}\dashint_{Q}\left|\diff_\lambda V(\nabla u)\right|^2\dd x \dd \lambda\leq \varepsilon \dashint_0^{|h|}\dashint_{2Q}\eta^{\hat q}\frac{|s|}{|h|}\dashint_0^{|s|}|\diff_\lambda V(\nabla u)|^2\dd\lambda\dd x \dd s\\
	+c_{\varepsilon}\frac{|h|^2}{R^2}\dashint_{2Q}|V(\nabla u)|^2.
\end{multline*}

Then we apply \cite[(4.20)]{DE} and \cite[Lemma 13]{DE} to obtain
\begin{equation*}
	\dashint_{Q}\left|\difh V(\nabla u)\right|^2\leq c\frac{|h|^2}{R^2}\int_{4Q}|V(\nabla u)|^2.
\end{equation*}
We divide this inequality by $|h|^2$ and use the characterization of Sobolev spaces via difference quotients to conclude \eqref{CaccIneq}.

A simple covering argument gives the statement.
\end{proof}

We use further local estimates to $u$. In \cite[Theorem 1.1]{BuFre2012} the following theorem appears.

\begin{Thm}\label{Thm:HoldCont}
\label{T1}
Let $p\in (1,\infty)$ and $F$ satisfy \eqref{5}--\eqref{6.1}. Then there exists $\alpha>0$ such that the unique weak solution $u$ to \eqref{sys} belongs to $\mathcal{C}^{\alpha}_{loc}(\oQ;\eR^N)$. Moreover, for all $Q \subset \oQ$ we have the estimate
$$
\|u\|_{\mathcal{C}^\alpha(Q)}\le C(Q)\|u\|_{W^{1,p}(\oQ)}.
$$
In addition, there exists $C>0$ such that for all $x_0\in \oQ$ and all $\varepsilon>0$ the solution $u$ satisfies the following 
potential inequality
\begin{equation}
\begin{split}
\int_{B_r(x_0)} &\frac{\varepsilon |\nabla u|^p}{r^\varepsilon|x-x_0|^{d-p-\varepsilon}} + \frac{|\nabla u|^{p-2}|\nabla u \cdot (x-x_0)|^2}{|x-x_0|^{d-p+2}}\; \dx \\
&\qquad \le C\left(\frac{r}{R}\right)^{\alpha p}\left(R^p + \int_{B_R(x_0)}\frac{|\nabla u|^p}{R^{d-p}}\; \dx\right)
\end{split}\label{Campan}
\end{equation}
 for all $0<2r<R<\textrm{ dist }(x_0,\partial \oQ).$
\end{Thm}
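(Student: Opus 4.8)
This is essentially \cite[Theorem~1.1]{BuFre2012}, the potential inequality \eqref{Campan} being its quantitative, Campanato-type form, so my plan is to recall the structure of that argument. First I would freeze the coefficients at $x_0$ and diagonalise the symmetric positive definite matrix $b(x_0)$ by a linear change of variables, after which \eqref{sys} reads, componentwise, $-\diver(A(x_0,\nabla u)\nabla u^\alpha)=0$, $\alpha=1,\dots,N$, with $A$ the uniformly elliptic $N\times N$ matrix from \eqref{6.1}; by \eqref{6} and \eqref{5}, $F_\eta(x_0,\cdot)$ is then a $p$-homogeneous potential obeying $F_\eta(x_0,\eta)\cdot\eta=pF(x_0,\eta)\sim|\eta|^p$. (For $p>d$ the assertion is already contained in Morrey's embedding, so the content lies in $p\le d$.)

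The decisive feature of the splitting condition is that the system becomes ``quasi-scalar''. Testing the $\alpha$-th equation with $u^\alpha\theta^2$ for a cut-off $\theta$ and summing over $\alpha$, the otherwise non-diagonal principal part reassembles as $\sum_{\alpha,\beta,i}A^{\alpha\beta}(x_0,\nabla u)\,\partial_i u^\alpha\,\partial_i u^\beta\ge\alpha_0|\nabla u|^p$, so that one obtains the Caccioppoli inequality $\int_{B_r(x_0)}|\nabla u|^p\le c\,r^{-p}\int_{B_{2r}(x_0)}|u-\mean{u}_{B_{2r}}|^p$; via Sobolev--Poincar\'e this yields a reverse H\"older inequality, hence $u\in W^{1,p+\delta}_{loc}(\oQ)$ for some $\delta>0$.

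Next I would produce the monotone quantity underlying \eqref{Campan} by a Pohozaev-type identity. Since $F(x_0,\cdot)$ is a $p$-homogeneous potential, testing the frozen equation with the radial multiplier $(x-x_0)\cdot\nabla u^\alpha$ (suitably cut off, and carrying a weight $|x-x_0|^{-\varepsilon}$ to generate the first term of \eqref{Campan}) and using Euler's identity $F_\eta(x_0,\nabla u)\cdot\nabla u=pF(x_0,\nabla u)$ together with $\partial_k[F(x_0,\nabla u)]=F_\eta(x_0,\nabla u)\cdot\partial_k\nabla u$ gives, after integration by parts, that $\Phi(\rho):=\rho^{\,p-d}\int_{B_\rho(x_0)}F(x_0,\nabla u)$ is non-decreasing with $\Phi'(\rho)\gtrsim\rho^{\,p-d}\int_{\partial B_\rho(x_0)}|\nabla u|^{p-2}|\partial_r u|^2$. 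Integrating from $0$ to $r$ bounds the radial term of \eqref{Campan} by $\Phi(r)\le\Phi(R)\lesssim R^p+R^{p-d}\int_{B_R(x_0)}|\nabla u|^p$, and a dyadic-annulus summation of the resulting Morrey bound $\int_{B_\rho(x_0)}|\nabla u|^p\lesssim\rho^{\,d-p}\,\Phi(R)$ bounds the first term of \eqref{Campan} in the same way --- but so far only with $\Phi(R)$, not with the decay factor $(r/R)^{\alpha p}$, on the right-hand side.

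The hard part is to upgrade this scale-invariant bound to the genuine decay $\Phi(r)\le C\,(r/R)^{\alpha p}\bigl(R^p+R^{p-d}\int_{B_R(x_0)}|\nabla u|^p\bigr)$ for some $\alpha>0$ depending only on the structural data and uniform in $x_0$; this simultaneously yields the factor $(r/R)^{\alpha p}$ in \eqref{Campan} and, via Morrey's lemma, the membership $u\in\mathcal C^\alpha_{loc}(\oQ;\eR^N)$ together with the bound $\|u\|_{\mathcal C^\alpha(Q)}\le C(Q)\|u\|_{W^{1,p}(\oQ)}$. A bare Caccioppoli--Gehring argument is scale invariant and cannot provide this gain, so here the potential and splitting structure must be exploited in full: I would couple the monotone $\Phi$ with the higher integrability obtained above and a compactness (blow-up) argument, in which a possible rescaled limit is an entire $W^{1,p}_{loc}$-solution of a frozen split system that the monotonicity formula forces to be trivial, thereby excluding slow decay. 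This is the step that fixes the value of $\alpha$, hence of $q_0$ in Remark~\ref{Rq0}.
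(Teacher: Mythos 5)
The paper does not prove this statement at all: Theorem~\ref{Thm:HoldCont} is imported verbatim from \cite[Theorem~1.1]{BuFre2012} (applied to the frozen functional $F(x_0,\cdot)$ of the comparison problem \eqref{sys}), so your opening sentence --- ``this is \cite[Theorem~1.1]{BuFre2012}'' --- is in fact the whole of the paper's argument, and stopping there would have been fine. The difficulty is that your attempted reconstruction of the proof of that theorem is not the argument of \cite{BuFre2012} and, read as a proof, has a genuine gap exactly at the decisive point. In \cite{BuFre2012} the estimate \eqref{Campan} is obtained by testing the system with $u$ (minus a constant) multiplied by truncated weights of the type $|x-x_0|^{p-d-\varepsilon}$, using the splitting structure \eqref{6}--\eqref{6.1} to give the term produced by differentiating the weight a sign, and then running a hole-filling iteration; this is precisely why the left-hand side of \eqref{Campan} contains the $\varepsilon$-weighted term and the radial term $|\nabla u|^{p-2}|\nabla u\cdot(x-x_0)|^2|x-x_0|^{p-d-2}$, and why the decay factor $(r/R)^{\alpha p}$ comes out with a quantitative $\alpha$ depending only on the structural constants. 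Your route replaces this by (i) a Pohozaev-type monotonicity formula for $\Phi(\rho)=\rho^{p-d}\int_{B_\rho}F(x_0,\nabla u)$ and (ii) a blow-up/compactness step meant to upgrade scale-invariant boundedness of $\Phi$ to genuine decay.

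Both steps are problematic. The multiplier $(x-x_0)\cdot\nabla u$ is only formal at this stage: justifying it needs second-derivative information and, more seriously, for a genuinely non-diagonal system the boundary and commutator terms in the Pohozaev identity do not automatically organize into a monotone quantity; you assert $\Phi'(\rho)\gtrsim\rho^{p-d}\int_{\partial B_\rho}|\nabla u|^{p-2}|\partial_r u|^2$ without showing how the splitting condition controls the off-diagonal contributions, which is where everywhere-regularity proofs for such systems usually fail (recall that without \eqref{6} the statement is false, by the classical counterexamples cited in the introduction, so the splitting structure must enter the decay estimate quantitatively and cannot be bypassed). Worse, the final step --- ``a rescaled limit is an entire solution that the monotonicity formula forces to be trivial'' --- is unsubstantiated: nonconstant affine maps are entire solutions of the frozen split system and are not excluded by any monotonicity of $\Phi$, so the claimed Liouville rigidity is false as stated; what is actually needed is a quantitative excess-decay (or the weighted Caccioppoli/hole-filling estimate of \cite{BuFre2012}), and a soft compactness argument would in any case only deliver a non-explicit smallness regime, not the uniform $\alpha$ and constant $C$, valid for all $x_0$ and all radii, that Theorem~\ref{Thm:HoldCont} asserts and that Remark~\ref{Rq0} and Lemmas~\ref{lem:pgreater2}, \ref{lem:pless2} rely on. In short: cite \cite{BuFre2012} and stop, or else reproduce its weighted test-function argument; the sketch you give does not close.
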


\begin{Cor}
Assume, in addition to the assumptions of Theorem \ref{Thm:HoldCont}, that $p\in(1,d)$. Then there is $c>0$ such that for any $Q\subset 3Q\subset \oQ$
	\begin{equation}\label{HSEst}
		[u]_{\mathcal{C}^\alpha(Q)}\leq cR^{-\alpha+1}\left(\left(\dashint_{2Q}|\nabla u|^p\right)^\frac{1}{p}+1\right),
	\end{equation}
	where $R$ is the length of the edge of $Q$.
\end{Cor}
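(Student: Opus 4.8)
The plan is to convert the potential inequality \eqref{Campan} of Theorem~\ref{Thm:HoldCont} into a Morrey-type decay estimate for $\nabla u$ on $Q$, and then to invoke the classical integral (Campanato/Morrey) characterisation of H\"older continuity. So fix a cube $Q$ of edge length $R$ with $3Q\subset\oQ$ and a point $x_0\in Q$. An elementary check of the geometry gives $B_{R/2}(x_0)\subset 2Q$ and $\dist(x_0,\partial\oQ)\ge R$, so for every $r\in(0,R/4)$ the estimate \eqref{Campan} applies with the radius there taken equal to $R/2$. First I would fix $\varepsilon=\varepsilon(d,p)>0$ so small that $d-p-\varepsilon>0$, which is possible precisely because $p<d$; then I discard the nonnegative second term on the left of \eqref{Campan} and use $|x-x_0|^{-(d-p-\varepsilon)}\ge r^{-(d-p-\varepsilon)}$ on $B_r(x_0)$ to bound the left-hand side from below by $\varepsilon\, r^{-(d-p)}\int_{B_r(x_0)}|\nabla u|^p$. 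On the right-hand side I estimate $\int_{B_{R/2}(x_0)}|\nabla u|^p\le\int_{2Q}|\nabla u|^p=(2R)^d\dashint_{2Q}|\nabla u|^p$ and collect the powers of $R$; since $\varepsilon$ is now a fixed number this yields
\begin{equation*}
\int_{B_r(x_0)}|\nabla u|^p\le c\,R^{p-\alpha p}\Bigl(1+\dashint_{2Q}|\nabla u|^p\Bigr)\,r^{\,d-p+\alpha p},\qquad 0<r<R/4 .
\end{equation*}

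Next I would record that the same inequality, with a larger constant, holds for all $r\le\diam Q$: for $r\ge R/4$ one uses the crude bound $\int_{B_r(x_0)\cap Q}|\nabla u|^p\le\int_{2Q}|\nabla u|^p$ together with $r^{\,d-p+\alpha p}\ge(R/4)^{\,d-p+\alpha p}$, which is legitimate since $d-p+\alpha p>0$. Thus $\nabla u$ belongs to the Morrey class with exponent $d-p+\alpha p$ on $Q$, with
\begin{equation*}
\sup_{x_0\in Q,\ r>0}\ r^{-(d-p+\alpha p)}\int_{B_r(x_0)\cap Q}|\nabla u|^p\ \le\ c\,R^{p-\alpha p}\Bigl(1+\dashint_{2Q}|\nabla u|^p\Bigr).
\end{equation*}

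Finally, since $0<\alpha<1$ and hence $d-p<d-p+\alpha p<d$, the classical Morrey lemma — proved e.g.\ by telescoping the averages $(u)_{B_{2^{-k}r}(x_0)}$ and using the Poincar\'e inequality on each dyadic ball — shows that $u$ has a representative in $\mathcal{C}^{\alpha}(Q)$ whose seminorm $[u]_{\mathcal{C}^{\alpha}(Q)}$ is controlled by the $1/p$-th power of the displayed supremum; using $(a+b)^{1/p}\le a^{1/p}+b^{1/p}$ for $p\ge1$ this is exactly
\begin{equation*}
[u]_{\mathcal{C}^{\alpha}(Q)}\le c\,R^{1-\alpha}\Bigl(\bigl(\dashint_{2Q}|\nabla u|^p\bigr)^{1/p}+1\Bigr),
\end{equation*}
with $c$ depending only on $d,N,\alpha_0,\alpha_1,p$ and $F$, i.e.\ \eqref{HSEst}.

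All of the analytic weight sits in \eqref{Campan}. The points that need genuine care are the geometric inclusions $B_{R/2}(x_0)\subset 2Q$ and $\dist(x_0,\partial\oQ)\ge R$; the admissibility of $\varepsilon$, which is where the hypothesis $p<d$ enters; and keeping the bookkeeping of the powers of $R$ exactly scale-homogeneous, so that the constant $c$ in \eqref{HSEst} is genuinely independent of $Q$ and $R$ — this last, essentially clerical, step is the only real obstacle.
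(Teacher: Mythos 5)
Your argument is correct and follows essentially the same route as the paper: it uses the first term on the left of \eqref{Campan} (choosing $\varepsilon$ with $d-p-\varepsilon>0$, which is where $p<d$ enters, exactly as in the paper) to get a Morrey decay for $\nabla u$, and then passes to the H\"older seminorm via Poincar\'e and the integral (Campanato/Morrey) characterisation, which is precisely the content of the paper's appeal to Lemma~\ref{lem:hchcf} plus a covering argument. The only difference is packaging: the paper states the intermediate Campanato bound $[u]^p_{\camp^{p,d+p\alpha}}$ explicitly, while you fold the same telescoping-plus-Poincar\'e step into the classical Morrey lemma.
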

	\begin{proof}
Let $B\subset 2B\subset \oQ$ be a ball with radius $\rho$. From \eqref{Campan} it follows by Poincar\'e inequality that
$$
[u]^p_{\camp^{p,d+p\alpha}(B)}\leq C\rho^{p-\alpha p}\dashint_{2B} (|\nabla u|^p+1).
$$

Let now $Q\subset 6\sqrt dQ\subset Q_0$ be a cube with sidelength $R$. Previous inequality together with the integral characterization of H\"older continuous functions,
(see Lemma~\ref{lem:hchcf} in appendix) gives
\begin{equation*}
[u]_{\mathcal{C}^\alpha(Q)}
\leq [u]_{\mathcal{C}^\alpha(\sqrt d B)}
\leq C[u]_{\camp^{p,d+p\alpha}(3\sqrt d B)}
\leq CR^{1-\alpha}\left(\left(\dashint_{6\sqrt d B} |\nabla u|^p\right)^{\frac 1p}+1\right)
\end{equation*}
The statement of the corollary then follows by a suitable covering argument. 
\end{proof}

%

\subsubsection{Interpolation of the regularity results}

The integrability of $\nabla u$ can be improved locally to $p+2$ as the next lemma states.

\begin{Le}\label{lem:8}

Under the assumption of Theorem \ref{T1} there is $c>0$ such that for any $Q\subset 2Q\subset \oQ$
\begin{equation}\label{P+2Int}
	\left(\dashint_{Q}|\nabla u|^{p+2}\right)^\frac{1}{p+2}\leq c\left(\left(\dashint_{2Q}|\nabla u|^p\right)^\frac{1}{p} + 1\right)
\end{equation}
\end{Le}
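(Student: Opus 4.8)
The plan is to obtain \eqref{P+2Int} by interpolating the second–order Caccioppoli estimate of Lemma~\ref{Lem:IntReg} against the H\"older bound \eqref{HSEst}, the exponent $p+2$ being exactly the threshold at which the $L^2$–integrability of $\nabla V(\nabla u)$ runs out. First I would settle the easy range $p\ge d-2$ (which contains $d=2$): there $dp/(d-2)\ge p+2$, and since $V(\nabla u)\in W^{1,2}_{\loc}(\oQ)$ by Lemma~\ref{Lem:IntReg}, the Sobolev embedding $W^{1,2}\hookrightarrow L^{2d/(d-2)}$ combined with \eqref{CaccIneq} gives $\||\nabla u|^{p/2}\|_{L^{2d/(d-2)}(Q)}\le cR^{-1}\||\nabla u|^{p/2}\|_{L^{2}(4Q)}$; raising this to the power $2/p$, rewriting it with integral averages, and lowering the exponent from $dp/(d-2)$ to $p+2$ by H\"older's inequality yields the claim (the enlargement from $2Q$ to $4Q$ being removed by a covering argument). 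So the substantial case is $p<d-2$; then $p<d$, hence the quantitative bound \eqref{HSEst} is available.

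For $p<d-2$ I would test the comparison equation $\diver F_\eta(x_0,\nabla u)=0$ by $w=\theta^{s}(u-c_0)|\nabla u|^{2}$, where $\theta\in C^\infty_c(2Q)$ satisfies $\chi_Q\le\theta\le\chi_{2Q}$ and $|\nabla\theta|\le c/R$, the exponent $s$ is taken large (say $s\ge p+2$, so that spare powers of $\theta$ can be matched below), and $c_0=(u)_{2Q}$. By the coercivity \eqref{5.55} the term coming from $\theta^{s}|\nabla u|^{2}\nabla u$ in $\int F_\eta(x_0,\nabla u)\cdot\nabla w$ is bounded below by $\alpha_0\int_{2Q}\theta^{s}|\nabla u|^{p+2}$. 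The two remaining terms are estimated using $|F_\eta(x_0,\nabla u)|\le\alpha_1|\nabla u|^{p-1}$, $|\nabla\theta|\le c/R$, and the pointwise inequality $|\nabla(|\nabla u|^{2})|\le c|\nabla u|^{(4-p)/2}|\nabla V(\nabla u)|$, which follows from \eqref{est:hammer}; this bounds them by $c\|u-c_0\|_{L^\infty(2Q)}\int_{2Q}\theta^{s}|\nabla u|^{(p+2)/2}|\nabla V(\nabla u)|$ and $cR^{-1}\|u-c_0\|_{L^\infty(2Q)}\int_{2Q}\theta^{s-1}|\nabla u|^{p+1}$. Young's inequality (using $s\ge p+2$ to absorb the surplus powers of $\theta$) then gives, for every $\varepsilon>0$,
\begin{equation*}
\int_{2Q}\theta^{s}|\nabla u|^{p+2}\le\varepsilon\int_{2Q}\theta^{s}|\nabla u|^{p+2}+c_\varepsilon\|u-c_0\|_{L^\infty(2Q)}^{2}\int_{2Q}|\nabla V(\nabla u)|^{2}+c_\varepsilon R^{d-p-2}\|u-c_0\|_{L^\infty(2Q)}^{p+2}.
\end{equation*}
After absorbing the first term on the right I would insert \eqref{CaccIneq} to replace $\int_{2Q}|\nabla V(\nabla u)|^{2}$ by $cR^{-2}\int_{cQ}|\nabla u|^{p}$ and \eqref{HSEst} to bound $\|u-c_0\|_{L^\infty(2Q)}\le c[u]_{\mathcal C^\alpha(2Q)}R^\alpha\le cR\big((\dashint_{cQ}|\nabla u|^{p})^{1/p}+1\big)$; dividing by $R^{d}=|Q|$ and taking the $(p+2)$–th root produces \eqref{P+2Int} with $cQ$ in place of $2Q$ for a fixed dimensional constant $c$, and the stated form follows by a standard covering argument.

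The step I expect to be the real obstacle is making the above rigorous, in particular justifying the absorption of $\varepsilon\int_{2Q}\theta^{s}|\nabla u|^{p+2}$: a priori only $\nabla u\in L^{dp/(d-2)}_{\loc}(\oQ)$ is available (from the Sobolev embedding of $V(\nabla u)\in W^{1,2}_{\loc}$), and for $p<d-2$ this is \emph{weaker} than $L^{p+2}_{\loc}$, so neither the test function $w$ above nor the quantity to be absorbed is known in advance to be admissible/finite. I would deal with this in the usual way, replacing the factor $|\nabla u|^{2}$ everywhere by a bounded Lipschitz truncation $g_M(|\nabla u|)$ (which makes $w$ admissible and the left-hand side finite), running all estimates with constants independent of $M$, and then letting $M\to\infty$ by monotone convergence; an alternative that bypasses the a priori integrability issue altogether is to carry out the whole scheme at the level of finite differences, in the spirit of the Nikolski\u{\i}–space estimates recalled earlier. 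A secondary, purely bookkeeping, point is the careful matching of the powers of $\theta$ and of the side-length $R$ so that the final estimate is scale-invariant in the form \eqref{P+2Int}.
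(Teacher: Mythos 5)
Your argument is correct in substance and rests on exactly the same two ingredients as the paper --- the weighted second--derivative bound coming from Lemma~\ref{Lem:IntReg} (i.e.\ \eqref{CaccIneq}, or equivalently \eqref{CaccIneq1}) and the $L^\infty$--oscillation bound obtained from \eqref{HSEst} --- combined by Young's inequality and absorption; but the mechanism differs. The paper never tests the equation in this step: it writes $\dashint_{2Q}\theta^{p+2}|\nabla u|^{p+2}=\dashint_{2Q}\theta^{p+2}|\nabla u|^{p}\nabla u\cdot\nabla\bigl(u-(u)_{2Q}\bigr)$ and integrates by parts, so the only inputs are \eqref{CaccIneq1} and \eqref{HSEst}, whereas you test \eqref{sys} with $\theta^{s}(u-c_0)|\nabla u|^{2}$ and use the coercivity \eqref{5.55} to generate the $|\nabla u|^{p+2}$ term; after the chain rule the error terms you obtain ($|\nabla u|^{p-1}|u-c_0|\,|\nabla u|\,|\nabla^2 u|$ and the $\nabla\theta$ term) are of exactly the same type as the paper's, so the two computations are essentially parallel, yours paying the extra price of admissibility of the test function. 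The obstacle you flag is genuine --- a priori $\nabla u\in L^{dp/(d-2)}_{loc}$ only, so neither the pairing $\int F_\eta(x_0,\nabla u)\cdot\nabla w$ nor the quantity to be absorbed is known finite when $p<d-2$ --- and your remedy (truncating $|\nabla u|^{2}$ by a bounded Lipschitz function, uniform constants, monotone convergence; or finite differences) is the standard and workable one; note that the paper's identity-based route faces the very same absorption issue for $\varepsilon\bigl(\mathsf A(2)\bigr)^{p+2}$ and passes over it silently, so on this point your write-up is if anything more careful. Two further remarks: your separate treatment of $p\ge d-2$ via the Sobolev embedding of $V(\nabla u)\in W^{1,2}_{loc}$ is a sensible addition, since \eqref{HSEst} was only stated for $p\in(1,d)$ (for $d=2$ replace $L^{2d/(d-2)}$ by an embedding into a sufficiently high finite exponent, which still scales correctly); and your pointwise inequality $|\nabla(|\nabla u|^{2})|\le c|\nabla u|^{(4-p)/2}|\nabla V(\nabla u)|$ does follow from \eqref{est:hammer}, so the exponents in your Young step match and the final scaling $R^{-2}\cdot R^{2}$, $R^{d-p-2}\cdot R^{p+2-d}$ indeed produces the scale-invariant form \eqref{P+2Int} after the covering argument.
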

\begin{proof}
	
	Let $\theta\in C^\infty_C(\oQ)$ be such that $\chi_{Q}\leq\theta\leq\chi_{2Q}$, $|\nabla \theta|\leq{c}/{R}$, where $R$ is the length of the edge of $Q$ as usual.

We denote
	\begin{align*}
			&\mathsf{A(r)}=\left(\dashint_{rQ}\theta^{p+2}|\nabla u|^{p+2}\right)^\frac{1}{p+2}\\
			&\mathsf{B(r)}=\|u-(u)_{rQ}\|_{L^\infty(rQ)}\\
			&\mathsf{C(r)}=\left(\dashint_{rQ}|\nabla u|^p\right)^\frac{1}{p}
	\end{align*}
	
	As a consequence of \eqref{CaccIneq} and \eqref{HSEst} we obtain
	\begin{gather}\label{LIEst}
		\mathsf{B}(r)\leq \sup_{x\in rQ}\dashint_{Q}|u(x)-u(y)|\dd y\leq R^\alpha[u]_{\mathcal{C}^\alpha(Q)}\leq c R\mathsf{C}(3r) + cR\quad\mbox{for $r\in[1,10]$},\\
\dashint_{2Q}|\nabla u|^{p-2}|\nabla^2 u|^2\leq CR^{-2}C^p(4).\label{CaccIneq1}
\end{gather}
	
	
	

	In order to estimate the quantity $\mathsf A$, we use integration by parts, H\"{o}lder's and Young's inequalities, \eqref{LIEst}, and \eqref{CaccIneq1}
	
	\begin{equation}\label{IIEst}
		\begin{split}
		  \left(\mathsf{A}(2)\right)^{p+2} &=\dashint_{2Q}\theta^{p+2}|\nabla u|^2|\nabla u|^p=\dashint_{2Q}\theta^{p+2}\nabla u\cdot \nabla(u-(u)_{2Q})|\nabla u|^p\\
			&=-(p+2)\dashint_{2Q}\theta^{p+1}|\nabla u|^p\nabla\theta\otimes(u-(u)_{2Q})\cdot\nabla u\\
			&-\dashint_{2Q}\theta^{p+2}|\nabla u|^p(u-(u)_{2Q})\cdot\Delta u\\
			&-p\dashint_{2Q}\theta^{p+2}|\nabla u|^{p-2}[\nabla u](u-(u)_{2Q})\cdot[\nabla^2u]\nabla u\\
			&\leq \frac{c}{R}\mathsf{B}(2)\dashint_{2Q}\theta^{p+1}|\nabla u|^{p+1}+c\mathsf{B}(2)\dashint_{2Q}\theta^{p+2}|\nabla u|^p|\nabla^2 u|\\
			&\leq \frac{c}{R}\mathsf{B}(2)\left(\mathsf{A}(2)\right)^{p+1}\\&+c\mathsf{B}(2)\dashint_{2Q}\left(\theta^{p+2}|\nabla u|^{p-2}|\nabla^2 u|^2\right)^\frac{1}{2}\left(\theta^{p+2}|\nabla u|^{p+2}\right)^\frac{1}{2}\\
			&\leq c(\varepsilon) R^{-p-2}\left(\mathsf{B}(2)\right)^{p+2}+c(\varepsilon)\left(\mathsf{B}(2)\right)^2\dashint_{2Q}|\nabla u|^{p-2}|\nabla^2 u|^2\\
			&+2\varepsilon\left(\mathsf{A}(2)\right)^{p+2}\leq c\left(\mathsf{C}(6)\right)^{p+2}+2\varepsilon\left(\mathsf{A}(2)\right)^{p+2} + c.
		\end{split}
	\end{equation}
	The demanded statement follows by a simple covering argument.
\end{proof}

In \eqref{P+2Int} we proved a reverse H\"older inequality which could be improved by Gehring's theorem. Here we proceed in a different way and trace the dependence of integrability of $\nabla u$ on the constant $\alpha$ from Theorem~\ref{Thm:HoldCont}.

Lemma~\ref{lem:8} does not show the full strength of the information from Theorem~\ref{Thm:HoldCont}. We improve it in the next lemma which holds for $p\geq 2$ only.

\begin{Le}\label{lem:pgreater2}
Let $p\geq 2$. Under the assumptions of Theorem \ref{T1} set $q\in(p,p+2/(1-\alpha))$. Then $\nabla u\in L^q_{loc}(\oQ)$ and there is $C>0$ such that for any $Q\subset 2Q\subset \oQ$
	\begin{equation}\label{est:pgreater2}
\left(\dashint_Q|\nabla u|^q\right)^{\frac1q}\leq c \left(\left(\dashint_{2Q}|\nabla u|^p\right)^{\frac1p} + 1\right).
	\end{equation}
The constant $C$ may depend on $q$.
	\end{Le}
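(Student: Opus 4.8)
The plan is to improve Lemma~\ref{lem:8} by iterating the integration-by-parts argument, this time keeping careful track of the contribution coming from the H\"older seminorm bound \eqref{HSEst}. In the proof of Lemma~\ref{lem:8} the quantity $\mathsf B(r)=\|u-(u)_{rQ}\|_{L^\infty(rQ)}$ was crudely bounded by $cR(\mathsf C(3r)+1)$, i.e. by a full power of $R$. But in fact $\mathsf B(r)\le R^\alpha[u]_{\mathcal C^\alpha(rQ)}$, and \eqref{HSEst} gives $\mathsf B(r)\le cR(\mathsf C(3r)+1)$ only because we threw away the gain $R^{\alpha-1}\cdot R = R^\alpha$ versus $R$; the honest statement is $\mathsf B(r)\le c R^\alpha\cdot R^{1-\alpha}(\mathsf C(3r)+1) = cR(\mathsf C(3r)+1)$, so the gain must be extracted differently — namely by interpolating the exponent $p+2$ against the $L^\infty$-oscillation with a scaling that produces a genuinely higher power. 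Concretely, I would run the same integration-by-parts computation as in \eqref{IIEst} but with $|\nabla u|^2|\nabla u|^{p}$ replaced by $|\nabla u|^{q-p}|\nabla u|^p = |\nabla u|^q$ for general $q\in(p,p+2/(1-\alpha))$, writing $|\nabla u|^{q-2}\nabla u = |\nabla u|^{q-2}\nabla(u-(u)_{rQ})$ and integrating by parts. Since $p\ge 2$ and $q<p+2$, the exponent $q-2\ge p-2\ge 0$ stays nonnegative and the second-derivative term is controlled by the Caccioppoli estimate \eqref{CaccIneq1}, i.e. by $\dashint |\nabla u|^{p-2}|\nabla^2 u|^2$, after splitting $|\nabla u|^{q-2} = |\nabla u|^{(p-2)/2}\cdot|\nabla u|^{q-2-(p-2)/2}$ and using Young's inequality.

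More precisely, the key steps in order are as follows. First, fix nested cubes and the cutoff $\theta$ as in Lemma~\ref{lem:8}, and define $\mathsf A_q(r)=(\dashint_{rQ}\theta^{q}|\nabla u|^{q})^{1/q}$ together with $\mathsf B(r)$ and $\mathsf C(r)$ as before. Second, perform integration by parts on $(\mathsf A_q(2))^q = \dashint_{2Q}\theta^q|\nabla u|^{q-2}\nabla u\cdot\nabla(u-(u)_{2Q})$, producing three terms: one with $\nabla\theta$, one with $\Delta u$, and one with $|\nabla u|^{q-4}[\nabla u][\nabla^2u]\nabla u$ (legitimate on the set $\{\nabla u\neq0\}$ since $q\ge p\ge2$). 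Third, bound each term by $\mathsf B(2)$ times an integral involving $|\nabla u|^{q-1}$ or $|\nabla u|^{q-2}|\nabla^2 u|$; for the latter, use Cauchy--Schwarz to split off $(\dashint\theta^q|\nabla u|^{p-2}|\nabla^2u|^2)^{1/2}$, controlled by $R^{-1}\mathsf C(4)^{p/2}$ via \eqref{CaccIneq1}, against $(\dashint\theta^q|\nabla u|^{2q-p})^{1/2}$. Fourth — and this is the crucial bookkeeping point — note $2q-p < q+2 \le q + 2/(1-\alpha)\cdot(1-\alpha) $, wait, rather $2q-p<q$ fails, so instead interpolate $\|\theta^{?}|\nabla u|\|$ in $L^{2q-p}$ between $L^q$ and $L^\infty$ — but $\nabla u$ is not bounded. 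Therefore the correct move is to absorb the top-order term using Young with the small parameter after first reducing $2q-p$ back to $q$ by writing $|\nabla u|^{2q-p}=|\nabla u|^{q}\cdot|\nabla u|^{q-p}$ and bounding $|\nabla u|^{q-p}$ is still not bounded; hence one must instead track powers of $\mathsf B$: the term is $\le \mathsf B(2)^2 R^{-2}\mathsf C(4)^p \cdot (\text{stuff})$, and the scaling $\mathsf B(2)^2 R^{-2}\sim R^{2\alpha-2}\mathsf C^2$ is what yields, upon Young's inequality balancing $(\mathsf A_q)^q$ against $R^{(2\alpha-2)\cdot\frac{q}{q-?}}\mathsf C^{\cdots}$, the constraint $q<p+2/(1-\alpha)$.

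Thus the fourth step is the heart of the matter: collect the estimate in the form $(\mathsf A_q(2))^q \le \varepsilon(\mathsf A_q(2))^q + c(\varepsilon)\big[R^{-q}\mathsf B(2)^q + \mathsf B(2)^{q/(q-1)}\mathsf A_q(2)^{?}+ \mathsf B(2)^{2}R^{-2}\mathsf C(4)^p\,\mathsf A_q(2)^{(2q-2p)/\cdots}\big]$, substitute $\mathsf B(2)\le cR(\mathsf C(6)+1)$, and choose the interpolation exponents so that every power of $\mathsf A_q(2)$ on the right is strictly less than $q$ (so it can be reabsorbed by Young) and every power of $R$ is nonnegative; the worst term dictates $1-\alpha > 2/(q-p)$, i.e. $q < p + 2/(1-\alpha)$, with equality being the borderline. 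Fifth, absorb $\varepsilon(\mathsf A_q(2))^q$, pass from $\mathsf A_q$ to the genuine average over $Q$ using $\theta\ge\chi_Q$, and finish with the standard covering argument to remove the factor on the larger cube and pass from $2Q$ to $2Q$ (or to the $\oQ$-uniform statement). The main obstacle I anticipate is precisely the careful choice in the fourth step: one has to balance three competing terms (gradient-of-cutoff term, Laplacian term, full Hessian term) by Young's inequality with exponents depending on $q$, $p$ and $\alpha$ simultaneously, ensuring the residual $\mathsf A_q$-power is subcritical and no negative power of $R$ survives — the restriction $p\ge2$ is what keeps $q-2\ge0$ and lets the Hessian term be matched against the already-available Caccioppoli bound \eqref{CaccIneq1} rather than requiring a new estimate.
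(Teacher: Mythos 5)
Your route --- rerunning the integration-by-parts computation of Lemma~\ref{lem:8} with the exponent $p+2$ replaced by a general $q$ --- cannot reach the stated range $q<p+2/(1-\alpha)$, and your own fourth step shows where it breaks. After Cauchy--Schwarz against the Caccioppoli quantity $\dashint\theta^{\hat q}|\nabla u|^{p-2}|\nabla^2u|^2\lesssim R^{-2}\mathsf{C}^p$ from \eqref{CaccIneq1}, the remaining factor carries the exponent $2q-p-2$ (not $2q-p$), and for $q>p+2$ this exceeds $q$, so it is not controlled by $\mathsf{A}_q$ and cannot be reabsorbed by Young; you notice this (``$2q-p<q$ fails'', ``$|\nabla u|^{q-p}$ is still not bounded'') but never resolve it. The only honest fix at fixed scale is an iteration feeding in a previously established $L^\sigma$ bound so that $2q-p-2\le\sigma$; that recursion $\sigma\mapsto(p+2+\sigma)/2$ starts at $\sigma=p$ and converges to $p+2$, so it never passes $p+2$. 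The obstruction is structural: in your argument the H\"older information enters only through $\mathsf{B}(2)\le R^\alpha[u]_{\mathcal C^\alpha}\le cR(\mathsf{C}+1)$, where the gain $R^\alpha$ is exactly eaten by the scaling $R^{1-\alpha}$ of the seminorm in \eqref{HSEst}; at the fixed scale $R$ there is no leftover smallness, so one integration by parts buys exactly the increment $+2$ in the exponent and nothing more. (Note also that your ``worst term dictates $1-\alpha>2/(q-p)$'' reads backwards: that inequality is $q>p+2/(1-\alpha)$, the opposite of the claimed constraint --- a symptom that the balancing you hope for does not actually materialize.)

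The paper extracts the extra range from a gain in $|h|$, not in $R$. From Lemma~\ref{Lem:IntReg} and \eqref{est:hammer} (this is where $p\ge2$ enters) one gets $u\in N^{1+2/p,p}_{loc}(\oQ)$, i.e. $\dashint_Q|\difh^2u|^p\lesssim(|h|/R)^{p+2}R^p\dashint_{4Q}|\nabla u|^p$, while \eqref{HSEst} gives $\|\difh^2u\|_{L^\infty}\lesssim|h|^\alpha[u]_{\mathcal C^\alpha}$. Interpolating $\dashint_Q|\difh^2u|^q\le\|\difh^2u\|_{L^\infty}^{q-p}\dashint_Q|\difh^2u|^p$ produces the factor $|h|^{p+2+\alpha(q-p)}$, i.e. $u\in N^{\mu(q),q}$ with $\mu(q)=(p+2+\alpha(q-p))/q$, and $\mu(q)>1$ exactly when $q<p+2/(1-\alpha)$; the equivalence of the two seminorms on $N^{\lambda,q}$ for $\lambda\in(1,2)$ then gives $\nabla u\in N^{\mu(q)-1,q}\subset L^q$ with the stated estimate, and a short bootstrap removes the auxiliary restriction $q<p^*$ before the covering argument. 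If you want to salvage your approach you would have to inject this $|h|$-refined use of the H\"older bound; as written, the proposal has a genuine gap at its central absorption step and, even optimistically completed, would only recover the range $q\le p+2$ of Lemma~\ref{lem:8}.
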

\begin{Rem} Note that the function $f(\alpha)=(2+p(1-\alpha))/(1-\alpha)$ is increasing and continuous on $[0,1)$, $f(0)=p+2$ and $f(1-)=+\infty$. So the information in Lemma~\ref{lem:pgreater2} seems to be satisfactory.
\end{Rem}

	\begin{proof}
	Lemma \ref{Lem:IntReg} together with \eqref{est:hammer} and the fact that $p\geq 2$ implies $\nabla u\in N^{\frac{2}{p},p}_{loc}(\oQ)$ and according to \eqref{ReduProp} $u\in N^{1+\frac{2}{p},p}_{loc}(\oQ)$. Moreover, \eqref{CaccIneq} implies
	\begin{equation}\label{GrNikEst1}
\begin{aligned}
		R^{-(p+2)}[\nabla u]^p_{N^{\frac{2}{p},p}(Q)}&\leq
	\dashint_{2Q}|\nabla V(\nabla u)|^2\leq \frac{c}{R^2}\dashint_{4Q}|V(\nabla u)|^2\\
&\leq
\frac{c}{R^2}\dashint_{4Q}|\nabla u|^p,
\end{aligned}
\end{equation}
where $R$ is a sidelength of cube $Q$.
We compute
\begin{equation}\label{est:nik1}
\begin{aligned}
\left([u]^{(1)}_{N^{1+\frac{2}{p},p}(Q)}\right)^p\leq
\left(\nm{u-\mean{u}_{Q}}_{N^{1+\frac{2}{p},p}(Q)}^{(1)}\right)^p\leq
\left(\nm{u-\mean{u}_{Q}}_{N^{1+\frac{2}{p},p}(Q)}^{(2)}\right)^p\\=
\left(\left(\dashint_Q|u-\mean{u}_{Q}|^p\right)^{\frac1p}+[\nabla u]_{N^{\frac{2}{p},p}(Q)}\right)^{p}
\leq R^p\dashint_{4Q}|\nabla u|^p.
\end{aligned}
\end{equation}
We choose $h\in\eR^d$. Then we obtain by \eqref{HSEst} and \eqref{est:nik1}
\begin{align*}
\dashint_{Q}|\difh^2 u|^q &
\leq \|\difh^2 u\|_{L^\infty(Q)}^{q-p}\dashint_{Q}|\difh^2 u|^{p}\\
&\leq c|h|^{\alpha(q-p)}[u]_{\mathcal{C}^\alpha(5Q)}^{q-p}\left(\frac{|h|}{R}\right)^{p+2}\left([u]^{(1)}_{N^{1+\frac2p,p}(5Q)}\right)^p\\
&\leq c |h|^{p+2+\alpha(q-p)} cR^{(q-p)(1-\alpha)-2}\left(\left(\dashint_{20Q}|\nabla u|^p\right)^{\frac{q-p}{p}+1} + \dashint_{20Q}|\nabla u|^p\right).
	\end{align*}
Define $\mu(q)=(2+p+\alpha(q-p))/q$. Notice that $\mu:[p,+\infty)\to(1,2)$. We get
$$
[u-\mean u_Q]_{N^{\mu(q),q}(Q)}^{(1)}\leq cR\left(\left(\dashint_{20Q}|\nabla u|^p\right)^{\frac{1}{p}} + 1\right).
$$
Assume for a while that $q<p^*:=pd/(d-p)$. The Poincar\'e inequality implies
\begin{equation}\label{est:quaterfinal}
\nm{u-\mean u_Q}_{N^{\mu(q),q}(Q)}^{(2)}\leq c\nm{u-\mean u_Q}_{N^{\mu(q),q}(Q)}^{(1)}\leq cR\left(\left(\dashint_{20Q}|\nabla u|^p\right)^{\frac{1}{p}} + 1\right).
\end{equation}
As
\begin{multline*}
\dashint_Q |\nabla u - \mean{\nabla u}|^q\leq cR^{(\mu(q) - 1)q} \dashint_{2Q}\dashint_{2Q} \frac{|\nabla u(x+h) - \nabla u(x)|^q}{|h^{(\mu(q) - 1)q}|}{\rm d}x{\rm d}h\\ \leq c\left(\frac 1R [u -\mean{u}]^{(2)}_{N^{\mu(q),q}(4Q)}\right)^q,
\end{multline*}
the H\"older inequality and \eqref{est:quaterfinal} yield
\begin{equation}\label{est:semifinal}
\left(\dashint_{Q}|\nabla u|^q\right)^{\frac{1}{q}}
\leq c\left(\left(\dashint_{20Q}|\nabla u|^p\right)^{\frac{1}{p}} + 1\right).
\end{equation}
The lemma is proved under additional assumption $q<p^*$ and with a larger integration domain on the right hand side. We use \eqref{est:semifinal} in \eqref{est:quaterfinal} in order to increase the validity of the lemma for $q\in[p,\min(p^{**},(2+p(1-\alpha))/(1-\alpha))$. The statement of the lemma is obtained after finite number of iterations and an application of a simple covering argument.
\end{proof}

Now we prove the same result as in Lemma~\ref{lem:pgreater2} also in the case $p<2$. It will need some preparations. Unfortunately, we do not know the method that would work in both cases.

\begin{Le}\label{lem:second-derivatives}
Let $Q\subset \rd$ be a cube with sidelength $R>0$,  $1<p\leq r<2$ and $w\in W^{2,p}(5Q)$. Then  for all $h\in B_R(0)$
\begin{equation}\label{est:second-derivatives}
\begin{aligned}
|h|^{-2r}&\dashint_Q|\difh^2 w|^r\leq \dashint_{5Q}|\nabla^2 w|^r\\
&\leq\left(\dashint_{5Q}|\nabla w|^{p-2}|\nabla^2 w|^2\right)^\frac{r}{2}\left(\dashint_{5Q}|\nabla w|^{\frac{r(2-p)}{2-r}}\right)^\frac{2-r}{2}.
\end{aligned}
\end{equation}
\end{Le}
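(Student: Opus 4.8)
The statement in Lemma~\ref{lem:second-derivatives} is a purely analytic/embedding estimate (no PDE), so the plan is to prove the two inequalities separately.

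\textbf{First inequality.} The bound $|h|^{-2r}\dashint_Q|\difh^2 w|^r\leq \dashint_{5Q}|\nabla^2 w|^r$ is the standard control of a second difference quotient by the second derivative. I would write $\difh^2 w(x)=w(x+2h)-2w(x+h)+w(x)$ and represent it as a double integral of $\nabla^2 w$ along the segment: for $w\in W^{2,p}$ (first by density for smooth $w$, then passing to the limit)
$$\difh^2 w(x)=\int_0^1\int_0^1 \nabla^2 w(x+(s+t)h)(h,h)\,\dd s\,\dd t,$$
so $|\difh^2 w(x)|\leq |h|^2\int_0^1\int_0^1|\nabla^2 w(x+(s+t)h)|\,\dd s\,\dd t$. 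Raising to the $r$-th power, using Jensen on the probability measure $\dd s\,\dd t$, integrating over $x\in Q$, applying Fubini, and noting that for $|h|\leq R$ the translated cube $Q+(s+t)h$ stays inside $5Q$ (in fact $3Q$ suffices, but $5Q$ is safe for $s+t\in[0,2]$), gives exactly the claimed bound, with the normalizing $|Q|$ in the dashed integrals matching up since translation is measure-preserving.

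\textbf{Second inequality.} The bound $\dashint_{5Q}|\nabla^2 w|^r\leq\left(\dashint_{5Q}|\nabla w|^{p-2}|\nabla^2 w|^2\right)^{r/2}\left(\dashint_{5Q}|\nabla w|^{r(2-p)/(2-r)}\right)^{(2-r)/2}$ is just Hölder's inequality after splitting the integrand. Write
$$|\nabla^2 w|^r=\left(|\nabla w|^{\frac{(p-2)r}{2}}|\nabla^2 w|^r\right)\cdot |\nabla w|^{\frac{(2-p)r}{2}}=\left(|\nabla w|^{p-2}|\nabla^2 w|^2\right)^{r/2}\cdot |\nabla w|^{\frac{(2-p)r}{2}},$$
and apply Hölder with exponents $2/r$ and $2/(2-r)$ (valid since $1\le r<2$): the first factor raised to $2/r$ gives $|\nabla w|^{p-2}|\nabla^2 w|^2$, and the second factor raised to $2/(2-r)$ gives $|\nabla w|^{r(2-p)/(2-r)}$. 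One should note that the exponent $p-2<0$ so the first integrand is interpreted on $\{\nabla w\ne 0\}$, and on $\{\nabla w=0\}$ one has $\nabla^2 w=0$ a.e.\ so the product is consistently zero there; since all integrals are over the same domain $5Q$ the dashed-integral normalization $(1/|5Q|)$ distributes correctly through Hölder.

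\textbf{Main obstacle.} There is no serious obstacle; the only points requiring care are the justification of the integral representation of $\difh^2 w$ for $W^{2,p}$ functions (handled by mollification plus the fact that $\difh^2$ and $\nabla^2$ both pass to the limit in $L^p_{loc}$), the bookkeeping that the relevant translates stay inside $5Q$ for $|h|\le R$, and the degenerate set $\{\nabla w=0\}$ in the Hölder step when $p<2$. I would present the smooth-function computation and remark that the general case follows by density.
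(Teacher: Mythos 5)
Your proof is correct and follows essentially the same route as the paper: the first inequality via the Newton--Leibniz representation of the second difference combined with Jensen's inequality, Fubini and a change of variables, and the second inequality via the splitting $|\nabla^2 w|^r=\left(|\nabla w|^{p-2}|\nabla^2 w|^2\right)^{r/2}|\nabla w|^{r(2-p)/2}$ and H\"older's inequality with exponents $2/r$ and $2/(2-r)$. Your additional remarks on the density argument and the degenerate set $\{\nabla w=0\}$ are sensible refinements of details the paper leaves implicit.
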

\begin{proof}
The inequality $|h|^{-2r}\dashint_Q|\difh^2 w|^r\leq \dashint_{5Q}|\nabla^2 w|^r$ is easy consequence of Theorem of Newton and Leibniz, Jensen's inequality, Fubini Theorem and a proper change of variables. Further we estimate by H\"older's inequality
\begin{equation*}\label{est:sd2}
\begin{aligned}
\dashint_{5Q}|\nabla^2 w|^r&=\dashint_{5Q}\big(|\nabla w|^{p-2}|\nabla^2 w|^2\big)^{\frac r2}|\nabla w|^{r\frac{(2-p)}2}\\
			&\leq\left(\dashint_{5Q}|\nabla w|^{p-2}|\nabla^2 w|^2\right)^{\frac r2}\left(\dashint_{5Q}|\nabla w|^{\frac{r(2-p)}{(2-r)}}\right)^{\frac{2-r}{2}}.
\end{aligned}
\end{equation*}

\end{proof}

\begin{Le}\label{lem:pless2}
Let $p<2$. Under the assumptions of Theorem \ref{T1} set $q\in(p,p+2/(1-\alpha))$. Then $\nabla u\in L^q_{loc}(\oQ)$ and there is $C>0$ such that for any $Q\subset 2Q\subset \oQ$
	\begin{equation}\label{est:pless2}
\left(\dashint_Q|\nabla u|^q\right)^{\frac1q}\leq C \left(\left(\dashint_{2Q}|\nabla u|^p\right)^{\frac1p} + 1\right).
	\end{equation}
The constant $C$ may depend on $q$.
	\end{Le}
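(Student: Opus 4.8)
The plan is to follow the architecture of the proof of Lemma~\ref{lem:pgreater2}, but to replace the step that produced $\nabla u\in N^{2/p,p}_{loc}(\oQ)$ — which breaks down for $p<2$, since then $\varphi''(|A|+|B|)|A-B|^2\approx|V(A)-V(B)|^2$ degenerates near the origin and no longer dominates $|A-B|^p$ — by a detour through Lemma~\ref{Lem:IntReg} and Lemma~\ref{lem:second-derivatives}. As a first step I would upgrade the information $V(\nabla u)\in W^{1,2}_{loc}(\oQ)$ of Lemma~\ref{Lem:IntReg} to $\nabla^2 u\in L^p_{loc}(\oQ)$: since $|\nabla^2 u|=c\,|\nabla u|^{(2-p)/2}|\nabla V(\nabla u)|$ almost everywhere, H\"older's inequality with exponents $2/p$ and $2/(2-p)$ bounds $\|\nabla^2 u\|_{L^p}$ by $\|\nabla V(\nabla u)\|_{L^2}$ and $\|\nabla u\|_{L^p}$; moreover the identity $|\nabla u|^{p-2}|\nabla^2 u|^2=c\,|\nabla V(\nabla u)|^2$ together with the Caccioppoli estimate \eqref{CaccIneq} gives, as in \eqref{CaccIneq1},
$$
\dashint_{2Q}|\nabla u|^{p-2}|\nabla^2 u|^2\le \frac{c}{R^2}\,\dashint_{4Q}|\nabla u|^p .
$$

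The second step is the interpolation. Fix $r\in[p,2)$. Lemma~\ref{lem:second-derivatives} gives, for $|h|\le R$,
$$
|h|^{-2r}\dashint_{Q}|\difh^2 u|^r\le\dashint_{5Q}|\nabla^2 u|^r\le\Bigl(\dashint_{5Q}|\nabla u|^{p-2}|\nabla^2 u|^2\Bigr)^{\!r/2}\Bigl(\dashint_{5Q}|\nabla u|^{\frac{r(2-p)}{2-r}}\Bigr)^{\!\frac{2-r}{2}},
$$
so whenever $\nabla u$ is already known to lie in $L^s_{loc}(\oQ)$ with $s=\tfrac{r(2-p)}{2-r}$, the first step controls the right-hand side by powers of $\dashint_{cQ}(|\nabla u|^p+1)$ and of $\dashint_{cQ}|\nabla u|^{s}$. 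Interpolating this second-difference bound with the H\"older estimate \eqref{HSEst} exactly as in Lemma~\ref{lem:pgreater2} — writing $\dashint_{Q}|\difh^2 u|^q\le\|\difh^2 u\|_{L^\infty(Q)}^{q-r}\dashint_{Q}|\difh^2 u|^r$ and $\|\difh^2 u\|_{L^\infty(Q)}\le c|h|^{\alpha}[u]_{\mathcal{C}^\alpha(5Q)}$ — one obtains $u\in N^{\nu,q}_{loc}(\oQ)$ with $\nu=(2r+\alpha(q-r))/q$; this index lies in $(1,2)$ precisely for $r<q<\tfrac{2-\alpha}{1-\alpha}\,r$, and then \eqref{ReduProp}, followed by the Poincar\'e passage from the second-difference to the gradient seminorm (licit, as in \eqref{est:quaterfinal}, as long as $q$ is below the Sobolev exponent associated with the current integrability of $\nabla u$), yields $\nabla u\in L^q_{loc}(\oQ)$ together with \eqref{est:pless2}, up to enlarging the cube on the right, which is remedied by the usual covering argument.

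The last step is the bootstrap. Starting from $\nabla u\in L^p_{loc}(\oQ)$ one may take $r=p$ (hence $s=p$) and reach every $q<\tfrac{(2-\alpha)p}{1-\alpha}$; given $\nabla u\in L^{q_n}_{loc}(\oQ)$ one picks $r_n$ slightly below $\tfrac{2q_n}{q_n+2-p}\in[p,2)$, so that $s_n=\tfrac{r_n(2-p)}{2-r_n}\le q_n$, and gains $q_{n+1}$ slightly below $\tfrac{2-\alpha}{1-\alpha}r_n$. The resulting recursion $q_{n+1}\approx\tfrac{2(2-\alpha)q_n}{(1-\alpha)(q_n+2-p)}$ is increasing and has $p+\tfrac{2}{1-\alpha}$ as its attracting fixed point, so finitely many iterations exhaust any $q\in(p,p+2/(1-\alpha))$; the Sobolev threshold mentioned above costs only further iterations, since the relevant Sobolev exponent also grows along the scheme. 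I expect the main obstacle to be exactly this coupling: Lemma~\ref{lem:second-derivatives} furnishes the second-order difference bound only at the price of a factor $\|\nabla u\|_{L^s}$ with $s>r\ge p$, so — unlike the one-shot argument available for $p\ge2$ — the integrability must be raised gradually while simultaneously keeping the Nikolski\u{\i} index in $(1,2)$ and respecting the Poincar\'e/Sobolev restriction, and one has to verify that this bookkeeping terminates exactly at the sharp value $p+2/(1-\alpha)$.
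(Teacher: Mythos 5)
Your proposal is correct and follows essentially the same route as the paper's own proof: the same combination of Lemma~\ref{lem:second-derivatives} with the weighted Caccioppoli bound $\dashint_{2Q}|\nabla u|^{p-2}|\nabla^2 u|^2\le cR^{-2}\dashint_{4Q}|\nabla u|^p$ coming from Lemma~\ref{Lem:IntReg}, interpolated against the H\"older estimate \eqref{HSEst} to produce a Nikolski\u{\i} seminorm bound with index above one, followed by the same bootstrap recursion $q\mapsto 2(2-\alpha)q/((1-\alpha)(q+2-p))$ with attracting fixed point $p+2/(1-\alpha)$ (the paper simply takes $r=2q/(2-p+q)$ exactly, so that $r(2-p)/(2-r)=q$, rather than slightly below it). The bookkeeping you flag as the main obstacle is resolved in the paper exactly as you anticipate: finitely many iterations, with the Poincar\'e/Sobolev passage and the covering argument handled as at the end of Lemma~\ref{lem:pgreater2}.
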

\begin{proof}
We will proceed by iterations. We assume that \eqref{est:pless2} holds for some $q$ (this is always true for $q=p$) and show that if $q<p+2/(1-\alpha)$ then \eqref{est:pless2} holds also with $q$ replaced by $\gamma\in(q,2q(2-\alpha)/(2-p+q)/(1-\alpha))$. Since $2q(2-\alpha)/(2-p+q)/(1-\alpha)-q$ is a positive and continuous function of $q$ on $[p,p+2/(1-\alpha))$ the statement \eqref{est:pless2} follows.

Let \eqref{est:pless2} hold. We show that this estimate continues to hold if we replace $q$ with $\gamma\in(q,2q(2-\alpha)/(2-p+q)/(1-\alpha))$. Note that this interval is nonempty. Set $r=2q/(2-p+q)$. We emphasize that if $q\in[p,2)$ then $r$ belongs to $[p,+\infty)$ and $r\leq q$ for $q\geq p$. The coefficient $r$ satisfies $r(2-p)/(2-r)=q$.

Lemma~\ref{lem:second-derivatives} applied to $u$ gives together with Lemma~\ref{Lem:IntReg}
\begin{equation}\label{est:pl1}
\begin{aligned}
\dashint_Q|\difh^2 u|^r
&\leq c|h|^{2r}\left(\dashint_{5Q}|\nabla u|^{p-2}|\nabla^2 u|^2\right)^\frac{r}{2}\left(\dashint_{5Q}|\nabla u|^q\right)^{\frac{r(2-p)}{2q}}\\
&\leq c|h|^{2r}\left(R^{-2}\dashint_{10Q}|\nabla u|^p\right)^{\frac1p\frac{rp}{2}}\left(\dashint_{5Q}|\nabla u|^q\right)^{\frac1q\frac{r(2-p)}{2}}\\
&\leq c|h|^{2r}R^{-r}\left(\dashint_{10Q}|\nabla u|^q\right)^{\frac rq}.
\end{aligned}
\end{equation}

We proceed similarly as in the proof of Lemma~\ref{lem:pgreater2}. Successive application of \eqref{est:pl1} and \eqref{HSEst} yields
\begin{align*}
\dashint_{Q}|\difh^2 u|^\gamma &
\leq \|\difh^2 u\|_{L^\infty(Q)}^{\gamma-r}\dashint_{Q}|\difh^2 u|^{r}\\
&\leq c|h|^{\alpha(\gamma-r)}[u]_{\mathcal{C}^\alpha(5Q)}^{\gamma-r}|h|^{2r}R^{-r}\left(\dashint_{10Q}|\nabla u|^q\right)^{\frac rq}\\
&\leq cR^\gamma\left(\frac{|h|}R\right)^{2r+\alpha(\gamma-r)}\left(\left(\dashint_{10Q}|\nabla u|^q\right)^{\frac \gamma q} + \left(\dashint_{10Q} |\nabla u|^q\right)^{\frac rq}\right).
	\end{align*}
For $\mu:=2r+\alpha(\gamma-r)\in(\gamma,2\gamma)$ we obtain
$$
[u]_{N^{\mu/\gamma,\gamma}(Q)}\leq cR\left(\left(\dashint_{100Q}|\nabla u|^q\right)^{\frac 1 q} + 1\right).
$$
This allows to conclude
$$
\left(\dashint_{Q}|\nabla u|^\gamma\right)^{\frac 1 \gamma}\leq c\left(\left(\dashint_{10Q}|\nabla u|^q\right)^{\frac 1 q} + 1\right)\leq c\left( \left(\dashint_{20Q}|\nabla u|^p\right)^{\frac 1 p} + 1\right),
$$
similarly as at the end of Lemma~\ref{lem:pgreater2}. The statement of the lemma follows by covering argument.
\end{proof}

Now we may proceed to the proof of the main result.

\section{The proof of Theorem \ref{mainclaim}}\label{proofofmain}
The statement of the theorem follows from Lemma~\ref{CZCP} applied to $w=|\nabla v|^{p/2}$ and $w_a = |\nabla u|^{p/2} + 1$ where $v$ is a solution to \eqref{eqn} and $v$ is a solution to \eqref{sys} on some cube $Q_0\subset\Omega$. The inequality \eqref{CZCP2} with $g\equiv 1$ follows from  \eqref{lem:diff0} just by considering the definition of $V$. Similarly we also get \eqref{CZCP1} from \eqref{est:pgreater2} or \eqref{est:pless2}. 

By the triangle inequality there exists $C>0$ such that
\begin{equation}\label{est:diffV}
\forall A,B\in\rdN: ||A|^{\frac p2}-|B|^{\frac p2}|^2\leq C|V(A)-V(B)|^2.
\end{equation}
Thus $|w -w_a|^2\leq C(|V(\nabla u) - V(\nabla v)|^2 + 1)$ and the desired estimate \eqref{CZCP3} follows from \eqref{lem:diff} provided $\kappa<\epsilon_0$, $\epsilon_0$ being from Lemma~\ref{CZCP}. At this moment we obtain a restriction on the sidelength of cube $Q_0$. As all assumptions of Lemma~\ref{CZCP} are met we get
\begin{equation}\label{est:final}
\left(\dashint_{Q} |\nabla v|^s\right)^{\frac 1{s}}\leq
c\left[1+\left(\dashint_{4Q}|G|^{s/(p-1)}\right)^{\frac 1{s}} +\left(\dashint_{4Q} |\nabla v|^p\right)^{\frac 1{p}}\right]
\end{equation}
for any $s\in (p,p+2/(1-\alpha))$.

\section{Appendix}

Here we recall \cite[Theorem III.1.2]{giaquinta} and its proof. We focus on the fact that the constant depends only on dimension $d$.
\begin{Le}\label{lem:hchcf}
Let $B_0\subset\rd$ be a ball, $u\in L^1(3B_0)$. Define
\begin{gather*}
[u]_{\camp^{p,d+p\alpha}(3B_0)}:=
\sup\{(\dashint_B|u-\mean{u}_B|^p)^{\frac1p}\rho^{-\alpha}; B\subset 3B_0, \mbox{$\rho$ is radius of $B$}\},\\
[u]_{C^\alpha(B_0)}:=
\sup\{|u(x)-u(y)||x-y|^{-\alpha}; x,y\in B_0\}.
\end{gather*}
Then there is $C>0$ that may depend only on $d$ such that
$$[u]_{C^\alpha(B_0)}\leq C[u]_{\camp^{p,d+p\alpha}(3B_0)}.$$
\end{Le}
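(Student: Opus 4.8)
The plan is to follow the classical Campanato--Morrey argument, keeping track of the fact that the only geometric input is the doubling property of balls in $\rd$, so the constant depends on $d$ alone. First I would fix two points $x,y\in B_0$, set $\rho=|x-y|$, and consider the sequence of concentric balls $B_i=B_{2^{-i}\rho}(x)$ for $i\ge 0$. Using the triangle inequality and H\"older's inequality, I would estimate $|\mean{u}_{B_{i+1}}-\mean{u}_{B_i}|\le c(d)(\dashint_{B_i}|u-\mean{u}_{B_i}|^p)^{1/p}$, where $c(d)$ comes only from the ratio of the volumes of $B_i$ and $B_{i+1}$. By definition of the Campanato seminorm this is bounded by $c(d)[u]_{\camp^{p,d+p\alpha}(3B_0)}(2^{-i}\rho)^{\alpha}$. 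Summing the geometric series in $i$ shows that $\mean{u}_{B_i}\to u(x)$ at a.e. Lebesgue point $x$ (so, after modification on a null set, at every $x$) and that $|u(x)-\mean{u}_{B_0'}|\le c(d)[u]_{\camp^{p,d+p\alpha}(3B_0)}\rho^{\alpha}$, where $B_0'=B_\rho(x)$.

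Next I would compare the two averages over balls of the same radius $\rho$ centred at $x$ and at $y$: since both $B_\rho(x)$ and $B_\rho(y)$ are contained in the ball $B_{3\rho}$ concentric with, say, $x$ (or a fixed auxiliary ball still inside $3B_0$, using $x,y\in B_0$ and $\rho=|x-y|$ together with a mild enlargement that is absorbed in $3B_0$), and each of them occupies a fixed fraction of $B_{3\rho}$ depending only on $d$, one gets
\[
|\mean{u}_{B_\rho(x)}-\mean{u}_{B_\rho(y)}|\le c(d)\Big(\dashint_{B_{3\rho}}|u-\mean{u}_{B_{3\rho}}|^p\Big)^{1/p}\le c(d)[u]_{\camp^{p,d+p\alpha}(3B_0)}\rho^{\alpha}.
\]
Combining this with the two one-sided estimates from the previous step (applied at $x$ and at $y$) and the triangle inequality yields $|u(x)-u(y)|\le C(d)[u]_{\camp^{p,d+p\alpha}(3B_0)}|x-y|^{\alpha}$, which is exactly the claim after taking the supremum over $x,y\in B_0$.

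The only delicate point is bookkeeping the inclusions so that every ball that appears is genuinely contained in $3B_0$ while the comparison balls remain comparable in measure with constants depending on $d$ only; once $\rho=|x-y|\le \operatorname{diam}B_0$ is used, all enlargements by a fixed factor stay inside $3B_0$, so this is routine. No step requires anything beyond Lebesgue differentiation, H\"older's inequality, and the explicit volume ratios of Euclidean balls, so the resulting constant $C$ is dimensional, as asserted.
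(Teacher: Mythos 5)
Your proposal is correct and follows essentially the same route as the paper's proof: telescoping averages over dyadically shrinking balls centred at $x$ and at $y$, summing the geometric series via the Campanato seminorm, identifying the limits with $u(x)$, $u(y)$ by Lebesgue differentiation, and comparing the two same-scale averages through a single enlarged ball of radius comparable to $|x-y|$, all with constants coming only from volume ratios. The only cosmetic difference is that the paper invokes the known Campanato embedding to work directly with the continuous representative (and uses an enlarged ball of radius $2|x-y|$ rather than $3|x-y|$), while you pass to a representative after the a.e.\ convergence; this does not affect the argument.
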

\begin{proof}
First we realize that $u\in C^\alpha_\loc(B_0)$ by \cite[Theorem III.1.2]{giaquinta}.
Fix $x,y\in B_0$ and denote $R=\dist(x,y)$, $B_{k}^x=B_{2^{-k}R}(x)$, $ B_{k}^y=B_{2^{-k}R}(y)$. It holds $\lim_{k\to+\infty}\mean{u}_{B^x_k}=u(x)$, $\lim_{k\to+\infty}\mean{u}_{B^x_k}=u(x)$ and $$|\mean{u}_{B^x_k}-\mean{u}_{B^x_{k+1}}|\leq 2^d\left(\dashint |u-\mean{u}_{B^x_{k}}|^p\right)^{\frac1p}\leq 2^dR^\alpha2^{-k\alpha}[u]_{\camp^{p,d+p\alpha}(3B_0)}.$$
 Consequently, $$|\mean{u}_{B^x_0}-u(x)|\leq 2^dR^\alpha\frac{2^\alpha}{2^\alpha-1}[u]_{\camp^{p,d+p\alpha}(3B_0)}$$ and similarly at point $y$.

Further, let $B_{-1}^{xy}$ be a ball with radius $2R$ containing $B_0^x$ and $B_0^y$. We get
\begin{multline*}|\mean{u}_{B^x_0}-\mean{u}_{B^y_0}|\leq |\mean{u}_{B^x_0} - \mean{u}_{B_{-1}^{xy}}| + |\mean{u}_{B^y_0} - \mean{u}_{B_{-1}^{xy}}|\\ \leq C\dashint_{B_{-1}^{xy}}|u - \mean{u}|\leq CR^\alpha [u]_{\camp^{p,d+p\alpha}}(3B_0).\end{multline*}
Altogether, 
$$|u(x)-u(y)|\leq CR^\alpha[u]_{\camp^{p,d+p\alpha}(3B_0)}.$$
\end{proof}


\bibliographystyle{amsplain}
\bibliography{literatura}

\end{document}